\newtheorem{theorem}{Theorem}[section]
\newtheorem{lemma}[theorem]{Lemma}
\newtheorem{cor}[theorem]{Corollary}
\theoremstyle{definition}
\theoremstyle{remark}
\newtheorem{remark}[theorem]{Remark}
\numberwithin{equation}{section}
\begin{document}

\title{Area-preserving geometric hermite interpolation}

%    Information for first author
\author{Geoffrey McGregor}
%    Address of record for the research reported here
\address{Department of Mathematics, McGill University, Montreal, QC, Canada.}
%    Current address
\curraddr{805 Sherbrooke St W, Montreal, QC H3A 0B9, Canada.}
\email{Geoffrey.McGregor@mail.mcgill.ca}
%    \thanks will become a 1st page footnote.
\thanks{The research of GMc was supported in part by the Schulich Scholarship and the Murata Fellowship at McGill University.}

%    Information for second author
\author{Jean-Christophe Nave}
\address{Department of Mathematics, McGill University, Montreal, QC, Canada.}
\email{jcnave@math.mcgill.ca}
\thanks{The research of JCN was supported in part by the NSERC Canada Discovery Grants Program. Additionally, JCN would like to thank the Shanghai Jiaotong University Institute of Natural Sciences for hosting him while completing this work.}

%    General info
\subjclass[2000]{68Q25, 68R10, 68U05}

\date{\today}

%\dedicatory{This paper is dedicated to our advisors.}

\keywords{B\'ezier curves, Hermite interpolation, Conservation}

\begin{abstract}
 In this paper we establish a framework for planar geometric interpolation with exact area preservation using cubic B\'ezier polynomials. We show there exists a family of such curves which are $5^{th}$ order accurate, one order higher than standard geometric cubic Hermite interpolation. We prove this result is valid when the curvature at the endpoints does not vanish, and in the case of vanishing curvature, the interpolation is $4^{th}$ order accurate. The method is computationally efficient and prescribes the parametrization speed at endpoints through an explicit formula based on the given data. Additional accuracy (i.e. same order but lower error constant) may be obtained through an iterative process to find optimal parametrization speeds which further reduces the error while still preserving the prescribed area exactly.
\end{abstract}

\maketitle

\section{Introduction}
In the last several decades, interpolation using parametric curves has been studied extensively with the primary application being geometric design and computer graphics \cite{DeBoore, FarinBook,FaroukiPrac, Gregory, Hollig}. The class of parametric polynomials we are concerned with here are continuous in both the position and velocity vectors. These curves are classified as $G^1$, or geometric continuity of order 1. Further discussion of parametric and geometric continuity can be found in \cite{parsmooth}.

In this paper we introduce a $G^1$ cubic B\'ezier interpolation framework which, given a parametric curve $\langle \gamma(s) \, , \, \xi(s) \rangle \in \mathbb{R}^2$, with $s\in[s_0,s_1]$,  exactly matches the area $\int_{s_0}^{s_1}\xi(\tau)\gamma'(\tau)\text{d}\tau$. A conservative framework of this type may be of particular interest in areas of mathematics and engineering where physical laws must be obeyed, for examples see \cite{LeVeque}. In addition to being consistent with physical principles, conservative schemes offer additional stability properties \cite{AndyWan}. From the outset, conservative methods of this type may not directly appeal to the geometric design community, however the results presented in this paper show it performs surprisingly well when compared to standard non-conservative methods, such as the curvature matching method described in \cite{DeBoore} and the curvature variation energy method discussed in \cite{jaklicCVE}. One interesting finding of our work is that requiring a traditionally $4^{th}$ order cubic B\'ezier to exactly match the area of a target function results in a $5^{th}$ order accurate interpolating polynomial. In fact, we show that there is an entire family of area-preserving $G^1$ cubic polynomials which are $5^{th}$ order accurate. In many applications choosing any member of this family will provide sufficient accuracy, however, in situations where high precision is required, an optimal member of the family may be selected through an iterative procedure.  We discuss details of the optimization procedure and show that the error can be further reduced by several orders of magnitude. %Computing the optimal choice adds to the computational cost of the interpolation scheme, however, as seen in the results section, highly accurate results can be obtained on coarse grids. Therefore, in situations where the additional accuracy is needed, the increased computation time can be justified by requiring a far coarser partition. 
Before introducing the area-preserving method, we discuss a selection of relevant results from the geometric design and interpolation literature. %We note that many optimization techniques may be used to find the optimal choice with greater efficiency, however, this is not discussed in this paper. 

Geometric interpolation was first introduced by de Boor et al. in \cite{DeBoore}. In that work a parametric-cubic interpolation framework matching function value, tangents and curvature at endpoints was derived. This is now classified as $G^2$ interpolation and is $6^{th}$ order accurate. The method reduces to $4^{th}$ order if the curvature vanishes anywhere within the interpolation domain.   Since that seminal result, work in geometric interpolation has grown extensively, with an emphasis on non-local quantities. For example, in \cite{ArcLength}, the author succeeded to create a $G^1$ interpolation framework which matches prescribed arclength using Pythagorean-hodograph (PH) curves. For more on PH curves see \cite{farouki3,farouki2,farouki1}. Other extensively studied methods of interpolation are concerned with seeking $G^1$ polynomial curves which minimize the strain energy, $\int_{t_0}^{t_1}(\kappa(t))^2\text{d}t$, where $\kappa(t)$ denotes the curvature, or the curvature variation energy $\int_{t_0}^{t_1}(\kappa'(t))^2\text{d}t$. For example, Jaklic and Zagar in \cite{jaklicSE} study $G^1$ cubic polynomials which minimize an approximate strain energy, also called the linearized bending energy. This is also studied in \cite{StrainEnergy} and extended to quintics in \cite{luG2SE}. In \cite{jaklicCVE} Jaklic and Zagar present a $4^{th}$ order accurate $G^1$ interpolation method which minimizes a functional approximating the curvature variation energy. Recently in \cite{CurvMin}, Lu et al. introduced a scheme which computes $G^1$ cubic interpolants minimizing the true curvature variation energy through a constrained minimization problem. The results are concluded to be better than the approximate methods in \cite{jaklicCVE}, however this additional accuracy comes with a significant increase in computational cost.  After an extensive search, it is to the best of our knowledge that area-preservation within the context of parametric interpolation is novel. In the results section we will compare our area-preserving $G^1$ interpolation method to the curvature matching method of de Boor et al. and the approximate curvature variation energy method of Jaklic and Zagar.

We begin section \ref{SecBezier} with a brief discussion of cubic B\'ezier curves and introduce the area-preserving cubic B\'ezier framework. We then prove two Lemmas which lead to the main Theorem stating that a certain class of area-preserving cubic B\'ezier curves are $5^{th}$ order accurate. The main Theorem is followed by a Corollary which states that no $6^{th}$ order method can be constructed within this framework. In section \ref{SecNum} numerical experiments are conducted to compare the area-preserving scheme to other geometric interpolation methods, while also verifying the results presented in section \ref{SecBezier}. We then discuss the optimized method and show that an increase in accuracy can be achieved by choosing an optimal member of the area-preserving cubic B\'ezier family. We conclude the numerical experiments section by discussing the application of area-preserving parametric interpolation to scalar conservation laws in one space dimension. This brief discussion solidifies the importance of the framework presented in this paper. In section \ref{Discussion} we give some concluding remarks and discuss future directions of work.

\section{Area-preserving cubic B\'ezier curves} \label{SecBezier}
A parametric cubic B\'ezier curve $\vec{B}(t)=\langle B_1(t)\, , \, B_2(t) \rangle$ interpolating point $\vec{A}$ to point $\vec{D}$, for $\vec{A},\vec{D} \in \mathbb{R}^2$,  is given by
\begin{equation}
\vec{B}(t)=\vec{A}(1-t)^3 +3\vec{C}_1(1-t)^2t+3\vec{C}_2(1-t)t^2+\vec{D}t^3, \quad \text{for $t\in[0,1]$}, \label{Bezier1}
\end{equation}
where $\vec{C}_1$ and $\vec{C}_2$ are referred to as control points which dictate the tangent direction and magnitude of $\vec{B}$ at $t=0$ and $t=1$. The tangent direction at endpoints will be extracted from the prescribed data, therefore, the remaining degrees of freedom are the magnitudes, $r_1>0$ and $r_2>0$, for the left and right tangents respectively. Taking the tangent direction at the left endpoint to be $\vec{\alpha}$ and $\vec{\beta}$ on the right we therefore obtain
\begin{equation*}
\vec{B}'(0)=r_1\vec{\alpha} \quad \text{and},\quad \vec{B}'(1)=r_2\vec{\beta} .
\end{equation*}
Rewriting the control points in terms of $\vec{A},\vec{D}, \vec{\alpha}, \vec{\beta}, r_1$ and $r_2$, yields the two-parameter family of cubic B\'ezier curves matching function value and tangent directions,
\begin{equation}
\vec{B}(t)=\vec{A}(1-t)^3 +3\Big(\vec{A}+\frac{r_1\vec{\alpha}}{3}\Big)(1-t)^2t+3\Big(\vec{D}-\frac{r_2\vec{\beta}}{3}\Big)(1-t)t^2+\vec{D}t^3, \quad \text{for $t\in[0,1]$}. \label{Bezier2}
\end{equation}
Given a parametric curve $\langle \gamma(s) \, , \, \xi(s)\rangle$ parametrized by $s \in [s_0,s_1]$, we reduce the two-parameter family of solutions (\ref{Bezier2}) to a one-parameter family by imposing the the parametric area condition
\begin{equation}
\int_{0}^{1}B_2(t)B_1'(t)\text{d}t=\int_{s_0}^{s_1}\xi(s)\gamma'(s)\text{d}s\label{BezArea}.
\end{equation}
Using that B\'ezier curves are formed by generalized convex combinations, we simplify the expression by shifting $\vec{A}$ to the origin, which yields
\begin{align}
\vec{B}(t)-\vec{A}=&\vec{B}(t)-\vec{A}((1-t)^3+3(1-t)^2t+3(1-t)t^2+t^3) \nonumber\\
=&r_1\vec{\alpha}(1-t)^2t+3\Big(\vec{D}^*-\frac{r_2\vec{\beta}}{3}\Big)(1-t)t^2+\vec{D}^*t^3, \quad \text{for $t\in[0,1]$}, \label{Bezier3}
\end{align} 
where $\vec{D}^*=\vec{D}-\vec{A}$, but for convenience we will drop the $^*$. Using the shifted form (\ref{Bezier3}), the integrand of (\ref{BezArea}), $B_2(t)B_1'(t)$, is a $5^{th}$ degree polynomial with coefficients given by $r_1,r_2, \vec{\alpha}, \vec{\beta}$ and $\vec{D}$. Integrating each term and simplifying the result yields
\begin{equation}
\int_{0}^{1}B_2(t)B_1'(t)\text{d}t=\frac{r_1r_2}{60}(\vec{\alpha}\times\vec{\beta} )+\frac{r_1}{10}(\vec{D}\times\vec{\alpha})+\frac{r_2}{10}(\vec{\beta}\times\vec{D})+\frac{D_1D_2}{2}=\mathcal{C},\label{BezArea2}
\end{equation}
where $\mathcal{C}$ is the prescribed area after shifting the left endpoint to the origin, $D_1$ and  $D_2$ are the first and second components of $\vec{D}$ respectively, and the notation ``$\times$'' refers to the planar vector product $\vec{\alpha}\times \vec{\beta}=\alpha_1\beta_2-\beta_1\alpha_2$. Note that shifting $\langle \gamma(s) \, , \, \xi(s)\rangle$ to $\langle \gamma(s) - x_0, \xi(s)-y_0 \rangle$, yields $\mathcal{C}=\int_{s_0}^{s_1}\xi(s)\gamma'(s)\text{d}s-y_0(\gamma(s_1)-\gamma(s_0))$. Returning to equation (\ref{BezArea2}), and moving $\frac{D_1D_2}{2}$ to the right hand side, we obtain the area constraint equation

\begin{equation}
\frac{r_1r_2}{60}(\vec{\alpha}\times\vec{\beta} )+\frac{r_1}{10}(\vec{D}\times\vec{\alpha})+\frac{r_2}{10}(\vec{\beta}\times\vec{D})=\mathcal{C}_R,\label{AreaCon}
\end{equation}
prescribing the signed area about the secant line of the parametric polynomial $\vec{B}$ to equal that of the data. A sketch of the prescribed signed area is given in Figure \ref{Relative area}.
\begin{figure}[tb]
\begin{center}
\includegraphics[width=50mm,height=40mm]{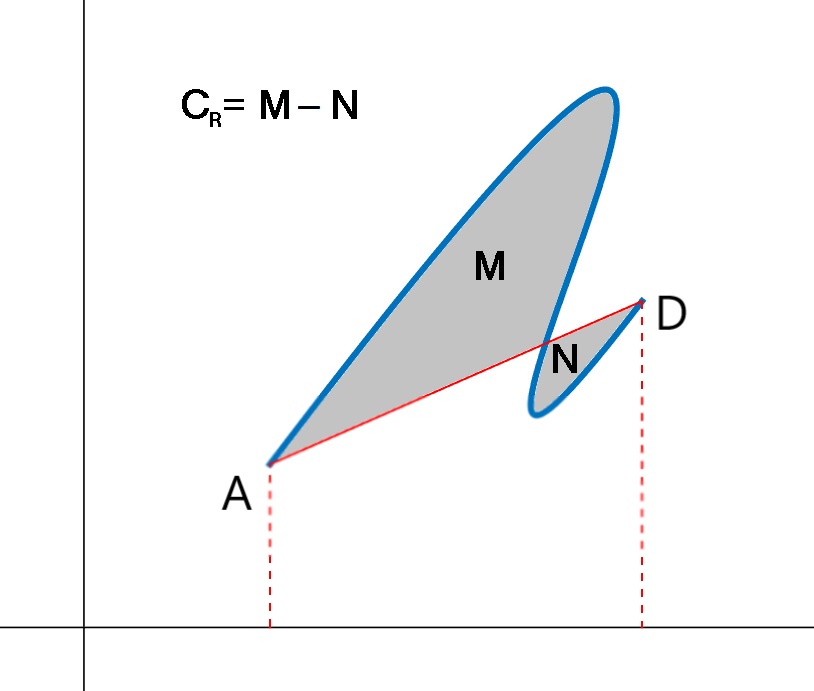}
\end{center}
\caption{ Signed area about the secant line between $\vec{A}$ and $\vec{D}$. }
\label{Relative area}
\end{figure}

Our goal is as follows: Given the signed area $\mathcal{C}_R$, we aim to find choices of $r_1>0$ and $r_2>0$ which satisfy equation (\ref{AreaCon}), and in addition, obtain an estimate for the convergence rate of the error in a suitable norm as the size of interpolation domain, $|\vec{D}|$, tends to zero.

We begin by discussing existence of solutions to (\ref{AreaCon}) by looking at the three possible cases, $\mathcal{C}_R>0,\mathcal{C}_R=0$ and $\mathcal{C}_R<0$. Existence reduces to investigating the signs of the coefficients in the left hand side of equation (\ref{AreaCon}). If $\mathcal{C}_R>0$, then at least one of the coefficients $(\vec{\alpha}\times\vec{\beta}), (\vec{D}\times\vec{\alpha})$, or $(\vec{\beta}\times\vec{D})$ must be positive. Similarly, if $\mathcal{C}_R<0,$ then at least one must be negative. If $\mathcal{C}_R=0$, then we must have one of each sign, or all must be zero. It is therefore clear that a lack of existence may only occur if the coefficients $(\vec{\alpha}\times\vec{\beta}), (\vec{D}\times\vec{\alpha})$, and $(\vec{\beta}\times\vec{D})$ are all non-negative or all non-positive. To understand this condition geometrically  we rotate the problem so that the secant line lays on the x-axis, which implies $\vec{D}=\langle D , 0 \rangle$, and we consider the rays
\begin{align*}
L^{\alpha}&:=\tau\langle \alpha_1 , \alpha_2 \rangle, \text{ with $\tau\geq0$} \quad \text{and,}\\
L^{\beta}&:=\langle D, 0 \rangle-\nu\langle \beta_1 ,\beta_2 \rangle, \text{ with $\nu\geq0$}.
\end{align*}

\begin{lemma}\label{lem0}
Consider equation (\ref{AreaCon}) with $\vec{D}=\langle D , 0 \rangle$ for some $D>0$. Then, the coefficients of (\ref{AreaCon}) are all non-negative or all non-positive provided $L^{\alpha}$ and $L^{\beta}$ are on the same side of the x-axis and do not cross.
\end{lemma}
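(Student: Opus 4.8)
The plan is to convert both geometric hypotheses into elementary sign conditions on the components of $\vec\alpha$, $\vec\beta$ and on the scalar $\vec\alpha\times\vec\beta$, after which the conclusion can be read off directly; the whole argument is a two-dimensional linear-algebra computation. First I would record the three coefficients of (\ref{AreaCon}) explicitly for $\vec D=\langle D,0\rangle$ with $D>0$: one has $\vec D\times\vec\alpha = D\alpha_2$, $\vec\beta\times\vec D = -D\beta_2$, and $\vec\alpha\times\vec\beta = \alpha_1\beta_2-\beta_1\alpha_2$. Since $D>0$, the first coefficient carries the sign of $\alpha_2$ and the second that of $-\beta_2$. The ray $L^{\alpha}$ lies on or above the $x$-axis exactly when $\alpha_2\geq 0$ and on or below it when $\alpha_2\leq 0$; since $L^{\beta}$ emanates from $\langle D,0\rangle$ in the direction $-\vec\beta$, it lies on or above the axis exactly when $-\beta_2\geq 0$. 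Hence ``$L^{\alpha}$ and $L^{\beta}$ on the same side of the $x$-axis'' is precisely the statement that $\alpha_2$ and $-\beta_2$ have the same sign, which already forces $\vec D\times\vec\alpha$ and $\vec\beta\times\vec D$ to share that sign.

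It then remains to show that the non-crossing hypothesis forces $\vec\alpha\times\vec\beta$ to have the same sign as well, or to vanish. For this I would locate the intersection of the lines carrying $L^{\alpha}$ and $L^{\beta}$ by solving $\tau\langle\alpha_1,\alpha_2\rangle = \langle D,0\rangle - \nu\langle\beta_1,\beta_2\rangle$ for $(\tau,\nu)$; Cramer's rule gives $\tau = D\beta_2/(\vec\alpha\times\vec\beta)$ and $\nu = -D\alpha_2/(\vec\alpha\times\vec\beta)$ whenever $\vec\alpha\times\vec\beta\neq 0$. When the two rays are on the same side of the axis, say both in the upper half-plane so that $\alpha_2>0$ and $\beta_2<0$, both numerators $D\beta_2$ and $-D\alpha_2$ are negative, so $\tau\geq 0$ and $\nu\geq 0$ — i.e.\ the intersection point actually lies on both rays, meaning they cross — if and only if $\vec\alpha\times\vec\beta<0$. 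Taking the contrapositive, if $L^{\alpha}$ and $L^{\beta}$ do not cross then $\vec\alpha\times\vec\beta\geq 0$, which is exactly the common sign of the other two coefficients in this case, so all three are simultaneously non-negative.

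Finally I would dispose of the remaining configurations by symmetry and degeneracy. Reflecting across the $x$-axis sends $\vec\alpha\times\vec\beta\mapsto -\vec\alpha\times\vec\beta$ and interchanges ``all coefficients $\geq 0$'' with ``all coefficients $\leq 0$'', so the lower-half-plane case reduces to the one just treated. If $\vec\alpha\times\vec\beta=0$ the two rays are parallel, that coefficient is zero, and the conclusion is immediate; and if $\alpha_2=0$ or $\beta_2=0$ the corresponding ray runs along the axis, one of the first two coefficients vanishes, and the same reasoning goes through with non-strict inequalities. I expect the only delicate point — the single place where a sign slip would be fatal — to be matching the orientation convention (which half-plane counts as ``above'') against the signs produced by Cramer's rule; the reflection symmetry is precisely what keeps that bookkeeping under control.
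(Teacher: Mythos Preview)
Your proposal is correct and follows essentially the same approach as the paper: translate ``same side of the $x$-axis'' into sign conditions on $\alpha_2$ and $-\beta_2$, solve the linear system for the ray intersection, and read off the sign of $\vec\alpha\times\vec\beta$ from the non-crossing hypothesis. The only differences are presentational: you invoke Cramer's rule for both $\tau$ and $\nu$ (the paper eliminates to a single equation in $\tau$, which suffices since $\nu$ and $\tau$ automatically share a sign when $\alpha_2\beta_2<0$), and you dispatch the lower-half-plane case via the reflection symmetry rather than by ``similarly.''
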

\begin{proof}
The two rays $L^{\alpha}$ and $L^{\beta}$ cross if the system
\begin{align*}
\tau\alpha_1&=D-\nu\beta_1, \quad \text{and,}\\
\tau\alpha_2&=-\nu\beta_2
\end{align*}
is satisfied for some positive values of $\tau$ and $\nu$. Suppose that both rays live above the x-axis, this implies that $\alpha_2>0$ and $\beta_2<0$, and therefore $(\vec{D}\times\vec{\alpha})>0$ and $(\vec{\beta}\times\vec{D})>0$. To ensure they do not cross we plug the second equation into the first and simplify to obtain
\begin{align*}
(\alpha_1\beta_2-\alpha_2\beta_1)\tau&=\beta_2D\\
\Rightarrow (\vec{\alpha}\times\vec{\beta})\tau&=-(\vec{\beta}\times\vec{D}).
\end{align*} 
We conclude that no positive solution $\tau$ exists if $(\vec{\alpha}\times\vec{\beta})>0$.  Therefore having both rays above the x-axis and not crossing yields all positive coefficients to (\ref{AreaCon}). Similarly having the rays below the x-axis and not crossing yields all negative coefficients.

One special case that may arise is if either $\alpha_2$ or $\beta_2$ is zero. Suppose without loss of generality that $\beta_2=0$. This means the second equation is always satisfied when $\tau=0$. To ensure the rays do not cross we thus require $\beta_1<0$. This implies $(\vec{\alpha}\times\vec{\beta})=-\alpha_2\beta_1$, which has the same sign as $(\vec{D}\times\vec{\alpha})$. Repeating the argument with $\alpha_2=0$ concludes the proof.
\end{proof}
\begin{remark}
We note that the coefficients of (\ref{AreaCon}) having the same sign does not imply a lack of admissible solutions. It does imply however that a solution will only exist if the provided area data has the appropriate sign.
\end{remark}

\begin{figure}[tb]
\begin{center}
\includegraphics[width=60mm,height=40mm]{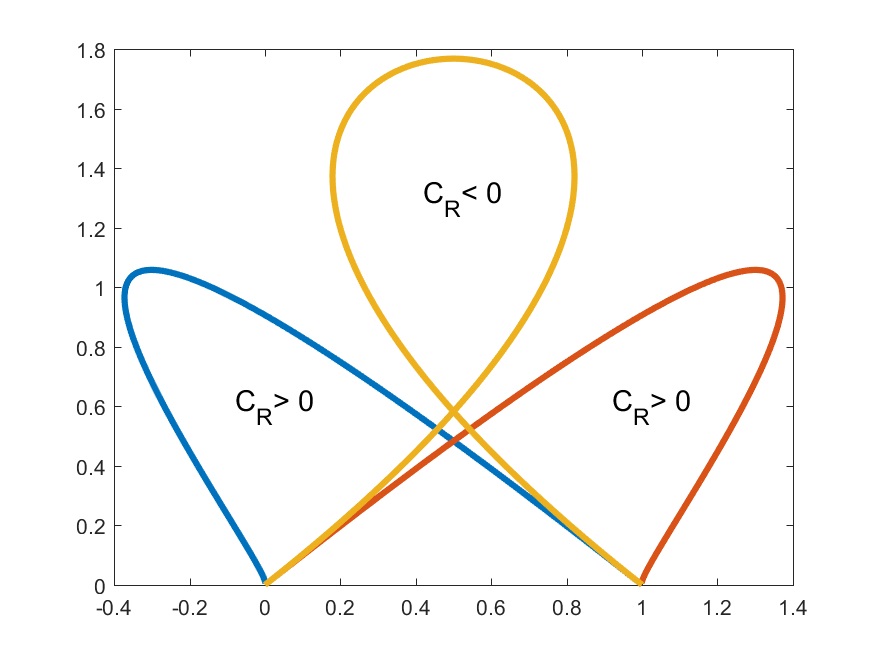}
\end{center}
\caption{ Solutions to (\ref{AreaCon}) for different area data. }
\label{AreasPlot}
\end{figure}

A sketch of how different areas can be obtained from the same endpoint and tangent data is shown in Figure \ref{AreasPlot}. In this example we have $\vec{\alpha}=\langle 1 , 1 \rangle$ and $\vec{\beta}=\langle 1, -1 \rangle$, which is an example where $L^{\alpha}$ and $L^{\beta}$ cross. From left to right in Figure \ref{AreasPlot} we have $(r_1,r_2)=(0.1,5),(r_1,r_2)=(5,5),(r_1,r_2)=(5,.1)$.  Within the context of our interpolation problem, if we fail to have existence of solutions to (\ref{AreaCon}), then a refinement of the mesh is required until the existence criteria is met.
\begin{remark}
We also note that in many application solutions with loops, such as the middle curve plotted in Figure \ref{AreasPlot}, are undesirable. In this case additional constraints can be placed on $r_1$ and $r_2$ to ensure such solutions are not permissible.
\end{remark}

We now turn our attention to choosing $r_1$ and $r_2$ satisfying (\ref{AreaCon}) possessing desirable convergence properties. In particular we are interested in finding $r_1$ and $r_2$ which minimize a prescribed distance between the planar curves $\vec{B}(t)$ and $\langle \gamma(s), \xi(s)\rangle$. Recall that given two parametric curves $\vec{P}_1:[0,1]\rightarrow\mathbb{R}^2$ and $\vec{P}_2:[0,1]\rightarrow\mathbb{R}^2$, the distance between the sets $P_1,P_2\in\mathbb{R}^2$ may be measured by the Hausdorff distance
\begin{equation}
d_{H}(P_1,P_2)=\max\Big(\sup_{x\in P_1}\inf_{y\in P_2}d(x,y),\sup_{y\in P_2}\inf_{x\in P_1}d(x,y)\Big).\label{Haus}
\end{equation}
Here we are seeking the asymptotic decay of $d_H(x,y)$ as the curve lengths tends to zero. In the context of our interpolation problem, for small enough curve length (after rotation if necessary), the target curve can be represented by the graph of a function $f(x)$.  Therefore, when investigating the accuracy of our interpolating parametric polynomial in this setting we may relax (\ref{Haus}) to the $L^{\infty}$ norm %and a curve with length tending to zero, we may relax (\ref{Haus}) to the Hence, we focus our attention to error approximation when the target curve can be represented by a function. Then, instead of the Hausdorff distance, $d_{H}(x,y)$, it suffices to consider $L^{\infty}$ norm given by
\begin{equation}
\|B_2(t)-f(B_1(t))\|_{L^{\infty}(t\in[0,1])}. \label{funcerror}
\end{equation} 
Taking the interpolation domain to be $x\in[0,h]$ we search for $r_1$ and $r_2$ which satisfy
\begin{equation}
||B_{h_2}(t)-f(B_{h_1}(t))||_{L^{\infty}([0,1])}=\mathcal{O}(h^4),
\end{equation}
where the subscript $h$ signifies that for each choice of $h>0$ the resulting B\'ezier interpolant, $\vec{B}(t)$, is generated with potentially different data.

To achieve the above result we employ the method utilized by de Boor et al. in \cite{DeBoore} by relying upon classical results for cubic Hermite polynomials $H(x)$. Matching the data $f(0),f'(0),f(h),f'(h)$ we have $||H(x)-f(x)||_{L^{\infty}([0,h])}=\mathcal{O}(h^4),$ with
\begin{equation}
||H(x)-f(x)||_{L^{\infty}([0,h])}\leq\max_{z\in[0,h]}\left|f^{(4)}(z)\right|\,\frac{h^4}{4!}.\label{HermErr}
\end{equation}
To do this, we require that the curve given by $\vec{B}_h(t)$ for $t\in[0,1]$ can be represented by the graph of a function $b_h(x)$.  We show for $0<r_1,r_2<3h
$ that $\frac{d}{dt}B_{h_1}(t)>0$ for $t\in (0,1)$.

 The first case we consider is when we can write $\vec{\alpha}=\langle 1, f'(0) \rangle$ and $\vec{\beta}=\langle 1, f'(h) \rangle$. Combining this with $\vec{A}=0$ and $\vec{D}=\langle h, f(h) \rangle$ in equation (\ref{Bezier2}) yields
\begin{equation}
B_{h_1}(t)=r_1(1-t)^2t+(3h-r_2)(1-t)t^2+ht^3. \label{Bh1}
\end{equation} 
To show $\frac{d}{dt}B_{h_1}(t)>0$ we consider it a function of three variables, $\frac{d}{dt}B_{h_1}(t)=g(r_1,r_2,t)$, given by
\begin{equation}
g(r_1,r_2,t)=t^2(3r_1+3r_2-6h)+t(-4r_1-2r_2+6h)+r_1. 
\end{equation}
We show that within the open set $\mathcal{R}=\big\{(r_1,r_2,t)\in\mathbb{R}^3 \big| (0,0,0)<(r_1,r_2,t)<(3h,3h,1)\big\}$ we have $g(r_1,r_2,t)>0$. We first observe that $g(r_1,r_2,t)$ has no critical points in $\mathcal{R}$, then, checking boundaries and corners of $\bar{\mathcal{R}}$, we have values in $[0,3h]$, implying that the interior has values within $(0,3h)$, and therefore $g(r_1,r_2,t)>0$ within the desired region. The cases when $\vec{\alpha}=\langle 0, 1 \rangle$ or $\vec{\beta}=\langle 0, 1 \rangle$ similarly lead to $\frac{d}{dt}B_{h_1}(t)>0$ for $(r_1,r_2,t)\in\mathcal{R}$. We note that if the data $\vec{\alpha}$ and $\vec{\beta}$ are given as unit vectors, $\vec{\alpha}= \frac{\langle 1 , f'(0) \rangle}{\sqrt{1+(f'(0))^2}}$ and $\vec{\beta}= \frac{\langle 1 , f'(h) \rangle}{\sqrt{1+(f'(h))^2}}$, then the above result becomes $\frac{d}{dt}B_{h_1}(t)>0$ for $0<r_1<3h\sqrt{1+(f'(0))^2}$ and $0<r_2<3h\sqrt{1+(f'(h))^2}$ by the same argument. 

Therefore, under the assumption that the target planar curve is the graph of a function $f(x)$, and that $(r_1,r_2,t) \in \mathcal{R}$, we may write
\begin{equation}
||B_{h_2}(t)-f(B_{h_1}(t))||_{L^{\infty}([0,1])}=||b_h(x)-f(x)||_{L^{\infty}([0,h])}
\end{equation}
where the graph of $b_h(x)$ for $x\in[0,h]$ yields the same curve as $\vec{B}_h(t)$ for $t\in[0,1]$.

Before proceeding we recall that the first spatial derivative of a planar curve $\langle x(t),y(t) \rangle$ is given by $\mathcal{D}^1(x(t),y(t))=\frac{y'(t)}{x'(t)}$, and that the $n^{th}$ spatial derivative is defined recursively by $\mathcal{D}^{n}(x(t),y(t))=\frac{\frac{d}{dt}\mathcal{D}^{n-1}(x(t),y(t))}{x'(t)}$.  Therefore, with $x(t)$ and $y(t)$ cubic polynomials, the $4^{th}$ spatial derivative simplifies to
\begin{equation}
\frac{x' \left(15 (x'')^2 y''-4x'x''' y''-6 y''' x' x''\right)-y' \left(15 (x'')^3-10 x''' x' x''\right)}{(x')^7}. \label{Fourth}
\end{equation}
Studying equation (\ref{Fourth}) allows us to prove the following Lemma. 

\begin{lemma}\label{lem1}
Suppose $\vec{B}_h(t)$ is interpolating a function $f(x)\in C^4([0,h])$ on $[0,h]$. Then, if $r_1=h+P_1(h)h^2$ and $r_2=h+P_2(h)h^2$, where $P_i(h)$ are polynomials in $h$, then $\vec{B}(t)$ is at least $4^{th}$ order accurate provided $P_1(h)+P_2(h)=\mathcal{O}(h^q)$ for any $q\geq 1$, or if $f''(0)=0$.
\end{lemma}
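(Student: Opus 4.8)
The plan is to compare the B\'ezier curve, written as the graph $b_h(x)$, against the cubic Hermite interpolant $H(x)$ of the same data $f(0),f'(0),f(h),f'(h)$, and to exploit the classical bound \eqref{HermErr}. Since $H$ is already $4^{th}$ order accurate, by the triangle inequality it suffices to show that $\|b_h(x)-H(x)\|_{L^\infty([0,h])}=\mathcal{O}(h^4)$ under the stated hypotheses. Both $b_h$ and $H$ match $f$ and $f'$ at the two endpoints, so the difference $e(x)=b_h(x)-H(x)$ vanishes to second order at $x=0$ and $x=h$; the natural route is to express $e$ via its ``missing'' matched quantity. Because $b_h$ comes from a cubic B\'ezier parametrization while $H$ is a cubic polynomial in $x$, the graph $b_h$ is in general \emph{not} polynomial in $x$, but a reparametrization of a cubic. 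The cleanest handle is the spatial Taylor expansion of the curve at the endpoints: $b_h$ and $H$ agree in $0^{th}$ and $1^{st}$ spatial derivatives at $x=0$ by construction, and one computes the discrepancy in the $2^{nd}$ spatial derivative (the curvature data, up to normalization) at $x=0$, and likewise at $x=h$.

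The key computation is therefore the second spatial derivative $\mathcal{D}^2 B_h$ at $t=0$ and $t=1$. Using \eqref{Bh1} and the companion formula for $B_{h_2}(t)$ in the representative case $\vec\alpha=\langle 1,f'(0)\rangle$, $\vec\beta=\langle 1,f'(h)\rangle$, one has $B_{h_1}'(0)=r_1$, $B_{h_1}''(0)=6h-4r_1-2r_2$, and similarly for $B_{h_2}$; substituting $r_1=h+P_1(h)h^2$, $r_2=h+P_2(h)h^2$ gives $B_{h_1}'(0)=h(1+P_1 h)$ and $B_{h_1}''(0)=-2h^2(P_1+P_2)+\mathcal{O}(h^3)$ — here the hypothesis $P_1(h)+P_2(h)=\mathcal{O}(h^q)$, $q\ge 1$, enters: it forces $B_{h_1}''(0)=\mathcal{O}(h^{3})$ (indeed $\mathcal{O}(h^{2+q})$ with the leading $h^2$ term killed). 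Feeding this into $\mathcal{D}^2=\big(x'y''-y'x''\big)/(x')^3$ and comparing with $H''(0)=f''(0)$ (since $H$ interpolates $f$ with matching curvature only up to the $\mathcal{O}(h)$ Hermite error), one finds the mismatch in second spatial derivative at $x=0$ is $\mathcal{O}(h)$ — which is exactly the size of discrepancy that a cubic correction term $\tfrac{1}{2}(\mathcal{D}^2 b_h(0)-H''(0))\,x^2(1-x/h)\cdots$ absorbs while only perturbing the sup-norm on $[0,h]$ by $\mathcal{O}(h)\cdot\mathcal{O}(h^3)=\mathcal{O}(h^4)$. The alternative hypothesis $f''(0)=0$ handles the degenerate-curvature case: there the $\mathcal{D}^2$ mismatch is controlled directly because the target's own second-order data is $\mathcal{O}(h)$ near both endpoints, so no cancellation from $P_1+P_2$ is needed.

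Concretely, the steps I would carry out are: (i) set up $e(x)=b_h(x)-H(x)$ and record that $e,e'$ vanish at $x=0,h$, so $e(x)=x^2(h-x)^2\,g(x)/h^4\cdot(\text{const})$ is \emph{not} quite right since $e$ need not be a polynomial — instead use the integral remainder / Taylor-with-remainder form of $b_h$ about each endpoint in the spatial variable, legitimate because $\tfrac{d}{dt}B_{h_1}>0$ on $[0,1]$ was already established for $(r_1,r_2,t)\in\mathcal{R}$ and $r_i=h+\mathcal{O}(h^2)<3h$ for small $h$; (ii) compute $B_{h_1}'(0),B_{h_1}''(0),B_{h_1}'''(0)$ and $B_{h_2}'(0),B_{h_2}''(0),B_{h_2}'''(0)$ in terms of $r_1,r_2,h,f'(0),f(h),f'(h)$, and the analogues at $t=1$; (iii) substitute $r_i=h+P_i(h)h^2$ and expand, tracking which terms survive under $P_1+P_2=\mathcal{O}(h^q)$; (iv) form $\mathcal{D}^1 b_h$ and $\mathcal{D}^2 b_h$ at $x=0$ and $x=h$ and compare to $f'$, $f''$ at those points, showing the $\mathcal{D}^1$ match is exact and the $\mathcal{D}^2$ mismatch is $\mathcal{O}(h)$ (either because $P_1+P_2$ is small, or because $f''(0)=0$ makes the relevant target quantity small — and by symmetry one needs the argument at both endpoints, which is why the statement only asks for $f''(0)=0$ in the special subcase where presumably $f''(h)$ is handled analogously or the hypothesis should be read at both ends); (v) conclude $\|b_h-H\|_{L^\infty}=\mathcal{O}(h^4)$ by writing $b_h-H$ as an order-$h$ perturbation of a polynomial that itself vanishes to second order at the endpoints, hence is $\mathcal{O}(h^3)$ in sup norm, times the $\mathcal{O}(h)$ coefficient; (vi) invoke \eqref{HermErr} and the triangle inequality; (vii) note the cases $\vec\alpha=\langle 0,1\rangle$ or $\vec\beta=\langle 0,1\rangle$ follow by the same monotonicity facts already recorded.

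The main obstacle is step (iv)–(v): making rigorous the passage from ``the second spatial derivative at the endpoints is off by $\mathcal{O}(h)$'' to ``the sup-norm of the graph difference is $\mathcal{O}(h^4)$,'' since $b_h$ is a nonpolynomial reparametrization of a cubic and one cannot simply quote a finite-dimensional interpolation-error formula. The honest way is to bound $b_h-H$ by constructing the Hermite-type interpolant $\tilde H$ matching $b_h,b_h',b_h''$ at $0$ and $b_h,b_h'$ at $h$ (a quintic), showing $\|b_h-\tilde H\|_{L^\infty}=\mathcal{O}(h^5)$ from a remainder estimate on the cubic curve, and then $\|\tilde H-H\|_{L^\infty}=\mathcal{O}(h^4)$ because $\tilde H-H$ is a fixed-degree polynomial vanishing to order $2$ at both endpoints with a single extra degree of freedom whose size is the $\mathcal{O}(h)$ curvature mismatch scaled against $h^3$. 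Getting the bookkeeping of orders exactly right — in particular confirming the leading $h^2$ coefficient of $B_{h_1}''(0)$ is precisely $-2(P_1+P_2)$ so that the hypothesis is the sharp one — is where the care is needed.
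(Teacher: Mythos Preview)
Your opening is right: triangle-inequality against the cubic Hermite $H$, and use \eqref{HermErr} for $\|H-f\|$. But you miss the key simplification for the other term. Since $b_h$ matches $f$ (hence $H$) in value and first derivative at both endpoints, $H$ is \emph{also} the cubic Hermite interpolant of $b_h$ itself on $[0,h]$. The bound \eqref{HermErr} therefore applies with $b_h$ in place of $f$:
\[
\|b_h - H\|_{L^\infty([0,h])}\;\le\;\sup_{x\in(0,h)}\Big|\frac{d^4}{dx^4}b_h(x)\Big|\,\frac{h^4}{4!}.
\]
So the whole lemma reduces to showing that the fourth spatial derivative of $b_h$ remains bounded as $h\to 0$ --- no auxiliary quintic $\tilde H$, no endpoint-curvature patching. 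The paper evaluates $\mathcal{D}^4(B_{h_1},B_{h_2})$ via the explicit rational expression \eqref{Fourth} (with $x^{(4)}=y^{(4)}=0$ since the components are cubics), Taylor-expands each factor, and finds the numerator equals $-24\,f''(0)\,h^6\big(P_1(h)+P_2(h)\big)+\mathcal{O}(h^7)$ while the denominator $(B_{h_1}')^7$ is $\sim h^7$. The hypothesis $P_1+P_2=\mathcal{O}(h^q)$, $q\ge 1$, or $f''(0)=0$, kills the $h^6$ term and gives boundedness.

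Your proposed route via $\mathcal{D}^2$ at the endpoints has two concrete problems beyond the gap you already flag in step~(v). First, the computation is off: from \eqref{Bh1} one has $B_{h_1}''(0)=6h-4r_1-2r_2=-2h^2\big(2P_1(h)+P_2(h)\big)$, not $-2h^2(P_1+P_2)$; symmetrically $B_{h_1}''(1)=2h^2(P_1+2P_2)$. The symmetric combination $P_1+P_2$ never appears in the second derivative at either endpoint --- it enters only through the \emph{third} derivative $B_{h_1}'''(t)=6h^2(P_1+P_2)$, which is constant in $t$. This is precisely why the paper's $\mathcal{D}^4$ computation recovers the stated hypothesis cleanly, whereas your $\mathcal{D}^2$ approach cannot isolate it. Second, the self-acknowledged obstacle in passing from endpoint derivative mismatches to a sup-norm bound on $b_h-H$ is real and nontrivial for a nonpolynomial $b_h$; the paper's second application of \eqref{HermErr} sidesteps it entirely.
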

\begin{proof}
Without loss of generality we assume $\vec{B}_h(0)=\vec{0}$, as discussed previously. The given expression for $r_1$ and $r_2$ implies we can find an $h$ small enough to ensure $0<r_1,r_2<3h$, as eventually the leading linear term will dominate, regardless of the values of $P_i(h)$. Since we are taking $\vec{\alpha}=\langle 1, f'(0) \rangle$ and  $\vec{\beta}=\langle 1, f'(h) \rangle$, this guarantees that the graph of $\vec{B}_h(t)$ can be generated by some function $b_h(x)$ for each value of $h>0$ sufficiently small. The given interpolation problem tells us that $b_h(0)=f(0)=B_{h_2}(0)=0$, $b_h(h)=f(h)=B_{h_2}(1)$, $b'_h(0)=f'(0)=\frac{B_{h_2}'(0)}{B_{h_1}'(0)}$, and $b_h'(h)=f'(h)=\frac{B_{h_2}'(1)}{B_{h_1}'(1)}$ (The case where the slope at endpoints is infinite  can be ignored since further partitioning and rotation can be used).  For any given $h$ small enough, the interpolation error is therefore given by
\begin{equation*}
e(h)=||b_h(x)-f(x)||_{L^{\infty}[0,h]}.
\end{equation*}
Letting $H(x)$ be the cubic Hermite for $f(x)$ on $x\in[0,h]$, the triangle inequality yields
\begin{equation*}
||b_h(x)-f(x)||_{L^{\infty}[0,h]}\leq ||b_h(x)-H(x)||_{L^{\infty}[0,h]}+||H(x)-f(x)||_{L^{\infty}[0,h]}.
\end{equation*}
The second term is $\mathcal{O}(h^4)$ by definition of the Hermite polynomial of $f(x)$. We are thus left to show that $||b_h(x)-H(x)||_{L^{\infty}[0,h]}$ is $\mathcal{O}(h^4)$. Since $H(x)$ is also the cubic Hermite polynomial for $b_h(x)$, we apply inequality (\ref{HermErr}) to obtain
\begin{equation}
||b_h(x)-f(x)||_{L^{\infty}[0,h]}\leq \sup_{x\in(0,h)}\Big | \frac{d^4}{dx^4}b_h(x)\Big |\frac{h^4}{4!}. \label{ineq1}
\end{equation}
The proof will be complete if $\frac{d^4}{dx^4}b_h(x)$ is bounded as $h\rightarrow 0$. We rely on the property that for each $x\in[0,h]$ there exists a $t^*\in[0,1]$ such that $\displaystyle \frac{d^4}{dx^4}b_h(x)=\mathcal{D}^4(B_{h_1}(t^*),B_{h_2}(t^*))$. Using equation (\ref{Fourth}) we demonstrate that $\mathcal{D}^4(B_{h_1}(t),B_{h_2}(t))$ is bounded.  We obtain the desired result by replacing each term by its Taylor expansion, then showing the lowest term in the numerator and denominator are both $\mathcal{O}(h^7)$.

Since we are interpolating a function, after a shift to the origin, the terms from equation (\ref{Bezier2}) become $\vec{A}=\vec{0}$, $\vec{D}=\langle h,f(h) \rangle$, $\vec{\alpha}=\langle 1, f'(0) \rangle$, and $\vec{\beta}=\langle 1, f'(h) \rangle$.  Starting with
\begin{align*}
f(h)&=hf'(0)+\frac{h^2}{2}f''(0)+\mathcal{O}(h^3)\\
f'(h)&=f'(0)+hf''(0)+\frac{h^2}{2}f'''(0)+\mathcal{O}(h^3),
\end{align*}
 we obtain
 \begin{align*}
 B_{h_1}'(t)&=h+\mathcal{O}(h^2), \quad B_{h_1}''(t)=2h^2(P_1(h)(-2+3t)+P_2(h)(-1+3t))+\mathcal{O}(h^3),\\ 
 B_{h_1}'''(t)&=6h^2(P_1(h)+P_2(h))+\mathcal{O}(h^3), \\
  B_{h_2}'(t)&=f'(0)h+\mathcal{O}(h^2),\\ B_{h_2}''(t)&=2f'(0)h^2(P_1(h)(-2+3t)+P_2(h)(-1+3t))+f''(0)h^2 +\mathcal{O}(h^3),\\ 
 B_{h_2}'''(t)&=6f'(0)h^2(P_1(h)+P_2(h))+\mathcal{O}(h^3).
 \end{align*}
 In fact, we can simplify our computations further by rewriting the second component derivatives in terms of the first component derivatives, yielding
 \begin{align*}
 B_{h_2}'(t)&=f'(0)B_{h_1}'(t)+\mathcal{O}(h^2),\\
 B_{h_2}''(t)&=f'(0)B_{h_1}''(t)+f''(0)h^2+\mathcal{O}(h^3),\\
 B_{h_2}'''(t)&=f'(0)B_{h_1}'''(t)+\mathcal{O}(h^3).
 \end{align*}
Plugging this into equation (\ref{Fourth}), we obtain
\begin{align*}
15x'(x'')^2y''&=15f'(0)B_{h_1}'(t)(B_{h_1}''(t))^2\left(B_{h_1}''(t)+\frac{f''(0)}{f'(0)}h^2\right)=\mathcal{O}(h^7), \\
-4(x')^2y''x'''&=-4f'(0)(B_{h_1}'(t))^2B_{h_1}'''(t)\left(B_{h_1}''(t)+\frac{f''(0)}{f'(0)}h^2\right) +\mathcal{O}(h^7),\\
-6(x')^2y'''x''&=-6f'(0)(B_{h_1}'(t))^2B_{h_1}''(t)B_{h_1}'''(t) +\mathcal{O}(h^7),\\
-15y'(x'')^3&=-15f'(0)(B_{h_1}'(t))(B_{h_1}''(t))^3=\mathcal{O}(h^7),\\
10y'x'x''x'''&=10f'(0)(B_{h_1}'(t))^2B_{h_1}''(t)B_{h_1}'''(t) +\mathcal{O}(h^7).
\end{align*}
Summing all terms above results in exactly
\begin{align*}
-4f''(0)(B_{h_1}'(t))^2B_{h_1}'''(t)h^2+\mathcal{O}(h^7)=-24f''(0)h^6(P_1(h)+P_2(h))+\mathcal{O}(h^7).
\end{align*}
Therefore, if $P_1(h)+P_2(h)=\mathcal{O}(h^q)$, for $q\geq 1$, or if $f''(0)=0$, the resulting numerator is at least $\mathcal{O}(h^7)$, which completes the proof.
\end{proof}

This result gives some direction on how to choose $r_1$ and $r_2$ to ensure convergence. In fact, it shows the existence of an entire class of cubic B\'ezier interpolants which are $4^{th}$ order accurate or better.  However, if we want to prove that a choice of $P_i(h)$ can yield $5^{th}$ order accuracy or better, then another approach for measuring the error is required. 

In the following Lemma we show that there exists a class of $P_i(h)$ which are $5^{th}$ order accurate or better, provided the curvature doesn't vanish at the endpoints.  Later we show that choices of $P_i(h)$ which satisfy the area constraint are within the class of $5^{th}$ order accurate cubic B\'ezier curves.

\begin{lemma}\label{lem2}
Suppose $\vec{B}_h(t)$ is interpolating a function $f(x)\in C^4([0,h])$ on $[0,h]$ with $f''(0)\neq 0$. Then, if $r_1=h+P_1h^3$ and $r_2=h+P_2(h)$, where $P_1 \in \mathbb{R}$, and $\displaystyle P_2(h)=-\frac{f''''(0)+24f''(0)P_1}{24f''(0)}h^3+\mathcal{O}(h^4)$ we have $\vec{B}_h(t)$ is at least $5^{th}$ order accurate.
\end{lemma}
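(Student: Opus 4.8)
The plan is to follow the same strategy used in the proof of Lemma \ref{lem1}, but carry the Taylor expansions one order further. The key observation is that Lemma \ref{lem1} already establishes $4^{th}$ order accuracy whenever $P_1(h)+P_2(h) = \mathcal{O}(h^q)$ for $q\geq 1$; here we have $r_1 = h + P_1 h^3$ and $r_2 = h + P_2(h)h^3 + \mathcal{O}(h^4)$ wait — more precisely $r_2 = h + P_2(h)$ with $P_2(h) = \mathcal{O}(h^3)$, so in the notation of Lemma \ref{lem1} both corrections are $\mathcal{O}(h^3)$ and $P_1(h)+P_2(h) = \mathcal{O}(h^2)$, so the hypotheses of Lemma \ref{lem1} are met and $\vec{B}_h$ is at least $4^{th}$ order. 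To upgrade to $5^{th}$ order I would return to inequality (\ref{ineq1}), but instead of merely bounding $\frac{d^4}{dx^4}b_h(x)$ I would show it is $\mathcal{O}(h)$, which upon multiplying by $h^4/4!$ gives the $\mathcal{O}(h^5)$ conclusion. Equivalently, I need the numerator of the $4^{th}$ spatial derivative expression (\ref{Fourth}), evaluated along $(B_{h_1}(t), B_{h_2}(t))$, to be $\mathcal{O}(h^8)$ rather than merely $\mathcal{O}(h^7)$, since the denominator $(x')^7$ is $\sim h^7$.

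First I would record the refined Taylor data: with $\vec{A}=\vec{0}$, $\vec{D}=\langle h, f(h)\rangle$, $\vec{\alpha}=\langle 1, f'(0)\rangle$, $\vec{\beta}=\langle 1, f'(h)\rangle$, and the prescribed $r_1, r_2$, I would expand $B_{h_1}', B_{h_1}'', B_{h_1}'''$ and the analogous second-component quantities through one more order in $h$ than in Lemma \ref{lem1}. In particular, from the proof of Lemma \ref{lem1} the total numerator of (\ref{Fourth}) collapses to $-4f''(0)(B_{h_1}'(t))^2 B_{h_1}'''(t) h^2 + \mathcal{O}(h^7)$, and $B_{h_1}'''(t) = 6h^2(P_1(h)+P_2(h)) + \mathcal{O}(h^3)$ where now $P_1(h) = P_1 h^2$ and $P_2(h)/h$... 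I mean the $h^2$-coefficient of the $r_i$ corrections are $P_1 h$ and $(P_2(h)/h) $ — concretely $r_1 - h = P_1 h^3$ and $r_2 - h = P_2(h) = -\frac{f''''(0)+24f''(0)P_1}{24 f''(0)} h^3 + \mathcal{O}(h^4)$. Plugging these in, the leading contribution to the numerator becomes $-24 f''(0) h^6 \big( P_1 h + \text{(coefficient of } h \text{ in } P_2(h)/h^2)\big)$ — but since both $P_1 h^3$ and $P_2(h)$ are already $\mathcal{O}(h^3)$, that term is $-24 f''(0) h^6 \cdot \mathcal{O}(h) = \mathcal{O}(h^7)$, which is only enough for $4^{th}$ order. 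So the genuine content is in the $\mathcal{O}(h^7)$ terms I must now expand: I need to collect every $h^7$ contribution in the numerator of (\ref{Fourth}) and show the total vanishes exactly when $P_2(h) = -\frac{f''''(0)+24 f''(0) P_1}{24 f''(0)} h^3 + \mathcal{O}(h^4)$.

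Concretely, the cancellation at order $h^7$ will involve: (i) the $h^3$-terms of $f(h)$ and $f'(h)$ (hence $f'''(0)$ and $f''''(0)$) feeding into $B_{h_1}''$, $B_{h_1}'''$, $B_{h_2}''$, $B_{h_2}'''$; (ii) the relation $B_{h_2}^{(k)}(t) = f'(0) B_{h_1}^{(k)}(t) + \text{(lower-order correction involving } f''(0), f'''(0))$ pushed to the next order; and (iii) the specific $h^3$-coefficients of $r_1$ and $r_2$. I would substitute these into the five grouped terms of (\ref{Fourth}) exactly as in Lemma \ref{lem1}, now keeping $\mathcal{O}(h^7)$ terms, sum them, and verify that the coefficient of $h^7$ is a multiple of $f''''(0) + 24 f''(0) P_1 + 24 f''(0)\cdot(\text{coeff})$ — i.e. precisely the quantity forced to zero by the stated formula for $P_2(h)$. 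Since $f''(0)\neq 0$, the choice of $P_2(h)$ is well-defined, and the $\mathcal{O}(h^4)$ freedom in $P_2(h)$ only affects the numerator at order $h^8$ or higher, preserving $5^{th}$ order. Finally, as in Lemma \ref{lem1}, the linear term in $r_1, r_2$ dominates so $0 < r_1, r_2 < 3h$ for $h$ small, the graph representation $b_h(x)$ exists, and (\ref{ineq1}) with $\frac{d^4}{dx^4} b_h = \mathcal{O}(h)$ gives $e(h) = \mathcal{O}(h^5)$.

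The main obstacle I anticipate is purely the bookkeeping: correctly assembling all $\mathcal{O}(h^7)$ contributions in the expansion of (\ref{Fourth}) without dropping cross terms, since at this order the "$B_{h_2}^{(k)} = f'(0) B_{h_1}^{(k)} + \ldots$" substitution no longer produces the clean cancellations seen in Lemma \ref{lem1} and one must track the $f'''(0)$-dependent corrections as well. The conceptual content — that imposing one scalar relation between the $h^3$-coefficients of $r_1$ and $r_2$ buys one extra order — is exactly analogous to the de Boor et al. argument, but verifying that the relation is the stated one requires the full order-$h^7$ computation.
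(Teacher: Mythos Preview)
Your approach has a genuine gap that cannot be repaired by more careful bookkeeping. The plan hinges on showing that $\frac{d^4}{dx^4}b_h(x)=\mathcal{O}(h)$, so that inequality (\ref{ineq1}) yields $\mathcal{O}(h^5)$. But this is false whenever $f^{(4)}(0)\neq 0$. Indeed, since $b_h$ and $f$ share the same cubic Hermite interpolant $H$, the exact error representations give
\[
b_h(x)-f(x)=\frac{b_h^{(4)}(\xi_x)-f^{(4)}(\eta_x)}{4!}\,x^2(x-h)^2,
\]
so fifth-order accuracy forces $b_h^{(4)}(\xi_x)=f^{(4)}(0)+\mathcal{O}(h)$, not $\mathcal{O}(h)$. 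Equivalently, the numerator of (\ref{Fourth}) is $\sim f^{(4)}(0)\,h^7$, not $\mathcal{O}(h^8)$; the $h^7$ coefficient carries the genuine fourth derivative of the target and does \emph{not} vanish under the stated choice of $P_2(h)$. Your order-$h^7$ expansion, if carried out, would simply reproduce $f^{(4)}(0)$ up to lower-order corrections.

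There is a second, independent obstruction: even if $b_h^{(4)}$ were $\mathcal{O}(h)$, the triangle-inequality route of Lemma \ref{lem1} still contains the term $\|H-f\|_{L^\infty}\sim |f^{(4)}(0)|\,h^4/4!$, which is a hard $\mathcal{O}(h^4)$ floor. That term is untouched by anything you do to $b_h$. This is precisely why the paper abandons the Hermite intermediary for Lemma \ref{lem2} and instead computes $B_{h_2}(t)-f(B_{h_1}(t))$ directly: it Taylor-expands $f(B_{h_1}(t))$ in $t$ (equation (\ref{fB1})), writes $B_{h_2}(t)$ explicitly (equation (\ref{Bh2})), and subtracts term by term in $t$, showing every coefficient is $\mathcal{O}(h^5)$ under the prescribed $P_2(h)$. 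The cancellation you are looking for occurs in the $t^2$, $t^3$, $t^4$ coefficients of that difference, not in the fourth spatial derivative of $b_h$.
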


\begin{proof}
Suppose our B\'ezier polynomial $\vec{B}_h(t)=\langle B_{h_1}(t),B_{h_2}(t)\rangle$, satisfying $B_{h_1}'(t)>0$ on $t\in(0,1)$, is interpolating a function $f(x)$ on $x\in[0,h]$, with $f''(0)\neq 0$.  Later we will see that for small enough $h>0$ and given a reasonable choice of $P_1$ that $B_{h_1}'(t)>0$ on $t\in(0,1)$, but for now we leave this as an assumption. To obtain the desired result we work directly with the $L^{\infty}$ error,
\begin{equation}
||B_{h_2}(t)-f(B_{h_1}(t))||_{L^{\infty}(t\in[0,1])}. \label{exacterror}
\end{equation}
Without loss of generality we continue to assume $f(0)=0$, which implies $B_{h_1}(0)=0$.  We begin by studying the Taylor expansion of $f(B_{h_1}(t))$ centred about $t=0$,
\begin{align}
f(B_{h_1}(t))&=f'(0)B'_{h_1}(0)t+\Big(f''(0)(B'_{h_1}(0))^2+f'(0)B''_{h_1}(0)\Big)\frac{t^2}{2} \nonumber \\
&+\Big(f'''(0)(B'_{h_1}(0))^3+3f''(0)B''_{h_1}(0)B'_{h_1}(0)+f'(0)B'''_{h_1}(0)\Big)\frac{t^3}{6} \nonumber \\ &+\Big( f''''(0)(B'_{h_1}(0))^4+3f''(0)(B''_{h_1}(0))^2+4f''(0)B'_{h_1}(0)B'''_{h_1}(0)+6f'''(0)B''_{h_1}(0)(B'_{h_1}(0))^2    \Big)\frac{t^4}{24} \nonumber\\
&+\mathcal{O}(t^5), \label{fB1}
\end{align}

where $B_{h_1}(t)=(h+P_1h^3)t-(P_2(h)+2P_1h^3)t^2+(P_1h^3+P_2(h))t^3$ given by (\ref{Bezier2}) using the data $\vec{\alpha}=\langle 1 , f'(0) \rangle$, $\vec{\beta}=\langle 1 , f'(h) \rangle$, $\vec{D}=\langle h, f(h) \rangle$, with $r_1=h+P_1h^3$ and $r_2=h+P_2(h)$.\\

Plugging this into (\ref{fB1}) yields
\begin{align*}
f(B_{h_1}(t))&=(h+P_1h^3)f'(0)t+\Big(f''(0)(h+P_1h^3)^2-2f'(0)(P_2(h)+2P_1h^3)\Big)\frac{t^2}{2}\nonumber\\
&+\Big(f'''(0)(h+P_1h^3)^3-6f''(0)(P_2(h)+2P_1h^3)(h+P_1h^3)+6f'(0)(P_1h^3+P_2(h))\Big)\frac{t^3}{6} \nonumber\\
 &+\mathcal{O}(t^4).
\end{align*}
Dropping terms which are $\mathcal{O}(h^5)$ or higher  results in
\begin{align}
f(B_{h_1}(t))&=(h+P_1h^3)f'(0)t+\Big(f''(0)(h^2+2P_1h^4)-2f'(0)(P_2(h)+2P_1h^3)\Big)\frac{t^2}{2}\nonumber\\
&+\Big(f'''(0)h^3-6f''(0)(hP_2(h)+2P_1h^4)+6f'(0)(P_1h^3+P_2(h))\Big)\frac{t^3}{6}\nonumber\\
 &+\Big(f''''(0)h^4+24f''(0)(P_1h^4+P_2(h)h)\Big)\frac{t^4}{24}+\mathcal{O}(t^5).\label{fB2}
\end{align}
We proceed to show for any choice of $P_1\in\mathbb{R}$ that taking \\$ P_2(h)=-\frac{f''''(0)+24f''(0)P_1}{24f''(0)}h^3+\mathcal{O}(h^4)$ yields $||B_{h_2}(t)-f(B_{h_1}(t))||_{L^{\infty}(t\in[0,1])}=\mathcal{O}(h^5).$ 

Again using our data for $\vec{\alpha},\vec{\beta}, \vec{D}, r_1$ and $r_2$, equation (\ref{Bezier2}) yields

\begin{align*}
B_{h_2}(t)&=(h+P_1h^3)f'(0)t+\Big(3f(h)-2hf'(0)-hf'(h)-f'(h)P_2(h)-2f'(0)P_1h^3\Big)t^2\\
&+\Big(hf'(0)+hf'(h)-2f(h)+f'(0)P_1h^3+f'(h)P_2(h)\Big)t^3.
\end{align*}
Taylor expanding $f(h)$ and $f'(h)$ up to $\mathcal{O}(h^5)$ and dropping terms which are $\mathcal{O}(h^5)$ or higher, $B_{h_2}(t)$ simplifies to

\begin{align}
B_{h_2}(t)&=(h+P_1h^3)f'(0)t+\Big(f''(0)\frac{h^2}{2}-f''''(0)\frac{h^4}{24}-(f'(0)+hf''(0))P_2(h)-2f'(0)P_1h^3\Big)t^2\nonumber \\
&+\Big(f'''(0)\frac{h^3}{6}+f''''(0)\frac{h^4}{12}+f'(0)P_1h^3+(f'(0)+hf''(0))P_2(h)\Big)t^3. \label{Bh2}
\end{align}

Using (\ref{Bh2}) and (\ref{fB2}) we can compute $||B_{h_2}(t)-f(B_{h_1}(t))||_{L^{\infty}(t\in[0,1])}$. Going term by term we have
\begin{align*}
|B_{h_2}(t)-f(B_{h_1}(t))|&\leq \Big|(h+P_1h^3)-(h+P_1h^3)\Big|f'(0)t\\
&+\Big|-f''(0)(hP_2(h)+P_1h^4)-f''''(0)\frac{h^4}{24} \Big|t^2\\
&+\Big|f''''(0)\frac{h^4}{12}+f''(0)(2hP_2(h)+2P_1h^4) \Big|t^3\\
&+\Big|f''''(0)\frac{h^4}{24}+f''(0)(P_1h^4+P_2(h)h)\Big|t^4+\mathcal{O}(h^5)+\mathcal{O}(t^5)
\end{align*}
Setting $ P_2(h)=-\frac{f''''(0)+24f''(0)P_1}{24f''(0)}h^3+\mathcal{O}(h^4)$ we obtain the desired result that\\ $|B_{h_2}(t)-f(B_{h_1}(t))|=\mathcal{O}(h^5)$.
\end{proof}

\begin{remark}
We note that the $\mathcal{O}(t^5)$ terms do not contribute any terms lower than $h^5$, because each time we  differentiate a term of the form \\$f^{(n_1)}(0)(B'_{h_1}(0))^{n_2}(B''_{h_1}(0))^{n_3}(B'''_{h_1}(0))^{n_4}$, for $n_i\in\mathbb{N}$ the lowest power of $h$ must increase since $B'_{h_1}(0)=\mathcal{O}(h),B''_{h_1}(0)=\mathcal{O}(h^2)$ and $B'''_{h_1}(0)=\mathcal{O}(h^3)$.
\end{remark}
With Lemmas \ref{lem1} and \ref{lem2} in hand, we may now state our main result.
\begin{theorem}\label{thm1}
Parametric area-preserving cubic B\'ezier curves, satisfying $B_1'(0)=r_1=h+Ph^3$, for $P\in \mathbb{R}$, and $B_1'(1)=r_2=\frac{6(10\mathcal{C}_R-r_1 (\vec{D}\times\vec{\alpha}))}{r_1(\vec{\alpha}\times\vec{\beta})+6(\vec{\beta}\times\vec{D})}$, are $5^{th}$ order accurate provided the curvature does not vanish at the endpoints. In the zero curvature case, the interpolation is $4^{th}$ order accurate.
\end{theorem}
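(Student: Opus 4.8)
The plan is to reduce Theorem~\ref{thm1} to Lemma~\ref{lem2} in the non-vanishing curvature case, and to Lemma~\ref{lem1} in the vanishing curvature case, by showing that the specific $r_2$ prescribed by the area constraint has the asymptotic form required by those lemmas. First I would work in the normalized setting used throughout the section: after a rigid motion the target curve is the graph of $f(x)$ on $[0,h]$, so $\vec{A}=\vec{0}$, $\vec{D}=\langle h,f(h)\rangle$, $\vec{\alpha}=\langle 1,f'(0)\rangle$, $\vec{\beta}=\langle 1,f'(h)\rangle$, and $\mathcal{C}_R$ is the signed area of $f$ about the secant line. In this setting I would compute Taylor expansions in $h$ of the three cross products appearing in $r_2=\dfrac{6\bigl(10\mathcal{C}_R-r_1(\vec D\times\vec\alpha)\bigr)}{r_1(\vec\alpha\times\vec\beta)+6(\vec\beta\times\vec D)}$, namely $\vec\alpha\times\vec\beta = f'(h)-f'(0) = hf''(0)+\tfrac{h^2}{2}f'''(0)+\cdots$, $\vec D\times\vec\alpha = f(h)-hf'(0) = \tfrac{h^2}{2}f''(0)+\tfrac{h^3}{6}f'''(0)+\cdots$, and $\vec\beta\times\vec D = hf'(h)-f(h) = \tfrac{h^2}{2}f''(0)+\tfrac{h^3}{3}f'''(0)+\cdots$, together with $\mathcal{C}_R=\int_0^h (f(x)-\ell(x))\,\ell'_{\perp}\cdots$, more simply $\mathcal{C}_R$ expanded against its Taylor series; the key fact is that $\mathcal{C}_R$ is the exact area, so substituting the cubic Hermite or the Taylor polynomial of $f$ for $f$ changes $\mathcal{C}_R$ only at order $h^5$.

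The core computation is then to substitute $r_1=h+Ph^3$ into that formula for $r_2$ and expand in powers of $h$. I expect the leading term to be $r_2=h+\mathcal{O}(h^3)$ — the $h^2$ terms must cancel because $\vec D\times\vec\alpha$ and $\vec\beta\times\vec D$ have the same $\tfrac{h^2}{2}f''(0)$ leading coefficient while $r_1(\vec\alpha\times\vec\beta)$ contributes at order $h^2$ in the denominator — and the crucial claim is that the $h^3$ coefficient of $r_2$ equals exactly $-\dfrac{f''''(0)+24f''(0)P}{24f''(0)}$, matching the hypothesis of Lemma~\ref{lem2} with $P_1=P$ and $P_2(h)$ of that precise form up to $\mathcal{O}(h^4)$. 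To see where the $f''''(0)$ comes in I would expand $\mathcal{C}_R$ to order $h^5$: writing $f(x)=\sum_{k} \tfrac{f^{(k)}(0)}{k!}x^k$ and computing $\int_0^h \xi\gamma'$ against the secant line, the $h^3$ and $h^4$ contributions to $\mathcal{C}_R$ are determined by $f''(0),f'''(0)$, while the $h^5$ contribution involves $f''''(0)$; feeding this through the quotient produces the asserted $h^3$ coefficient of $r_2$. Since $f''(0)\neq 0$ guarantees the denominator $r_1(\vec\alpha\times\vec\beta)+6(\vec\beta\times\vec D) = h^2 f''(0)(1+\mathcal{O}(h))\cdot(\text{const})+\cdots\neq 0$ for small $h$, the expansion is legitimate, and one also checks $0<r_1,r_2<3h$ eventually so that $B_{h_1}'(t)>0$ holds and the graph representation is valid. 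Lemma~\ref{lem2} then delivers fifth order.

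For the vanishing curvature case $f''(0)=0$ (and/or $f''(h)=0$), the denominator of $r_2$ is no longer $\Theta(h^2)$ in the same way, but one still gets $r_2=h+P_2(h)h^2$ with $P_2$ a convergent series in $h$; then Lemma~\ref{lem1} applies directly — either the ``$f''(0)=0$'' branch of its hypothesis, or, if only $f''(h)$ vanishes, a symmetric version obtained by reversing orientation — giving fourth order. The main obstacle I anticipate is purely the bookkeeping of the quotient expansion: one must carry enough terms of $\mathcal{C}_R$ and of the three cross products (through $h^5$ in the numerator, $h^3$ in the denominator) and be careful that the exact-area hypothesis is what forces the $h^4$-level terms to conspire into exactly the $P_2(h)$ of Lemma~\ref{lem2} rather than something merely $\mathcal{O}(h^3)$ with the wrong coefficient; getting the constant $-\dfrac{f''''(0)+24f''(0)P}{24f''(0)}$ on the nose, rather than up to an unexplained discrepancy, is the crux. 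A secondary point requiring care is confirming that the degenerate sub-cases ($\vec\alpha$ or $\vec\beta$ vertical, denominator of $r_2$ vanishing) are handled by the same further-partitioning-and-rotation remark already invoked in the proofs of the lemmas.
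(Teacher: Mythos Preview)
Your proposal is correct and follows essentially the same approach as the paper: reduce to Lemmas~\ref{lem1} and~\ref{lem2} by Taylor-expanding the explicit formula for $r_2$ (numerator through $h^5$, denominator through $h^4$) and reading off that $r_2=h-\dfrac{f''''(0)+24f''(0)P}{24f''(0)}h^3+\mathcal{O}(h^4)$ when $f''(0)\neq 0$, and $r_2=h+\mathcal{O}(h^2)$ when $f''(0)=0$. One small caution: your sign on $\vec D\times\vec\alpha$ is flipped (it equals $hf'(0)-f(h)$, not $f(h)-hf'(0)$), so double-check signs when you carry out the quotient expansion.
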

\begin{proof}
We begin by proving the non-zero curvature case, which simply breaks down into showing that setting $r_1=h+Ph^3$ and $r_2=\frac{6(10\mathcal{C}_R-r_1 (\vec{D}\times\vec{\alpha}))}{r_1(\vec{\alpha}\times\vec{\beta})+6(\vec{\beta}\times\vec{D})}$ satisfies the conditions from Lemma \ref{lem2}. Again, since we are seeking the error as $h\rightarrow 0$, we will assume that we are interpolating a function $f(x)$ on $[0,h]$. Inputting the data from $f(x)$ into the given equation for $r_2$ yields
\begin{align}
r_2&=\frac{6\left(10\left(\int_{0}^{h}f(x)\text{d}x-\frac{h f(h)}{2}\right) -(h+Ph^3)\left(\langle h,f(h) \rangle \times \langle 1,f'(0) \rangle \right)\right) }{(h+Ph^3)\left(\langle 1,f'(0) \rangle\times\langle 1,f'(h) \rangle\right)+6\left(\langle 1,f'(h) \rangle\times\langle h,f(h)\rangle\right)}\nonumber \\
&=\frac{6\left(10\left(\int_{0}^{h}f(x)\text{d}x-\frac{h f(h)}{2}\right) -(h+Ph^3)\left(hf'(0)-f(h)\right)\right) }{(h+Ph^3)\left(f'(h)-f'(0)\right)+6\left(f(h)-hf'(h)\right)}. \label{r2}
\end{align}

Plugging in the Taylor expansion for each term in (\ref{r2}), we obtain
\begin{equation}
r_2=\frac{-2f''(0)h^3-\frac{3}{2}f'''(0)h^4+(3f''(0)P-\frac{1}{2}f''''(0))h^5+\mathcal{O}(h^6)}{-2f''(0)h^2-\frac{3}{2}f'''(0)h^3+(f''(0)P-\frac{7}{12}f''''(0))h^4+\mathcal{O}(h^5)},
\end{equation} 
which, for small enough $h>0$ is equivalent to
\begin{equation}
r_2=h-\frac{f''''(0)+24f''(0)P}{24f''(0)}h^3+\mathcal{O}(h^4).
\end{equation}
Therefore, by Lemma \ref{lem2}, we have our result.

In the case that $f''(0)=0$, we follow a similar approach. Taking a Taylor expansion in $x$ of each term in $r_2$ we obtain
\begin{equation}
r_2=\frac{-\frac{3}{2}f'''(0)h^4-\frac{1}{2}f''''(0)h^5+\mathcal{O}(h^6)}{-\frac{3}{2}f'''(0)h^3-\frac{7}{12}f''''(0)h^4+\mathcal{O}(h^5)},
\end{equation} 
which, for small enough $h>0$, is equivalent to 
\begin{equation*}
r_2=h-\frac{f''''(0)}{18f'''(0)}h^2 +\mathcal{O}(h^3).
\end{equation*}  Therefore, by Lemma \ref{lem1}, since $r_1=h+Ph^3$ and $r_2=h+P_2(h)h^2$, for $P_2(h)$ a polynomial and $f''(0)=0$, we have that the curvature vanishing case is $4^{th}$ order accurate.
\end{proof}
A simple computation leads us to the following Corollary on optimality.

\begin{cor}\label{cor1}
There does not exist a $6^{th}$ order cubic B\'ezier area-preserving interpolating polynomial.
\end{cor}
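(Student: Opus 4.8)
The plan is to leverage the error analysis already carried out in the proof of Lemma~\ref{lem2} and its accompanying remark. There we computed the leading term of the interpolation error for the family $r_1 = h + P_1 h^3$, $r_2 = h + P_2(h)$: after forcing $P_2(h) = -\frac{f''''(0) + 24 f''(0) P_1}{24 f''(0)} h^3 + \mathcal{O}(h^4)$, the $t^2$, $t^3$, and $t^4$ coefficients of $B_{h_2}(t) - f(B_{h_1}(t))$ all become $\mathcal{O}(h^5)$. To prove the corollary I would retain the \emph{exact} $h^5$ contributions to these coefficients (rather than discarding them as $\mathcal{O}(h^5)$) and show that no admissible further tuning of the free parameters can annihilate all of them simultaneously.

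The key steps, in order, are as follows. First, observe that within the area-preserving cubic B\'ezier framework the only remaining degrees of freedom, once the function value and tangent directions at the endpoints are fixed and the area constraint~(\ref{AreaCon}) is imposed, are the single scalar $P$ appearing in $r_1 = h + Ph^3$ (the value of $r_2$ is then determined by the formula in Theorem~\ref{thm1}); one may additionally allow $P$ itself to be an arbitrary power series in $h$, $P = P(h)$, so effectively we have the freedom of one formal series. Second, carry the expansions~(\ref{fB2}) and~(\ref{Bh2}) one order further in $h$, keeping all $h^5 t^2$, $h^5 t^3$, $h^5 t^4$ terms explicitly; these will involve $f'''''(0)$, $f''''(0)$, $f'''(0)$, $f''(0)$, $f'(0)$ and the next coefficient of $P(h)$ and of $P_2(h)$. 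Third, note that $r_2$ is already pinned down to order $h^3$ by the area constraint and to order $h^4$ by the requirement of Lemma~\ref{lem2}; the area formula~(\ref{r2}) determines the $h^4$-correction to $r_2$ as well, so there is genuinely only \emph{one} adjustable series left. Fourth, write down the three residual $h^5$-coefficients (of $t^2$, $t^3$, $t^4$) as affine functions of that one adjustable parameter and exhibit a generic $f$ — e.g. take $f$ with $f''(0) \neq 0$ and $f'''''(0)$ chosen so that the resulting three-component vector cannot be brought to zero by any single scalar shift — thereby showing the error is exactly $\Theta(h^5)$ and never $\mathcal{O}(h^6)$.

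I expect the main obstacle to be bookkeeping: one must be careful that the $\mathcal{O}(t^5)$ tail in~(\ref{fB1}) genuinely contributes nothing below $h^6$ (the remark after Lemma~\ref{lem2} handles the $h^5$ case, and the same monomial-degree argument extends, since each $t$-differentiation of $f^{(n_1)}(0)(B'_{h_1}(0))^{n_2}(B''_{h_1}(0))^{n_3}(B'''_{h_1}(0))^{n_4}$ raises the minimal $h$-power), and that the area constraint really does consume exactly one of the two B\'ezier parameters, leaving a one-parameter family rather than two. Once the residual $h^5$ error is written as an affine image of a single scalar in $\mathbb{R}^3$, the conclusion is immediate: a line in $\mathbb{R}^3$ cannot pass through the origin for all choices of interpolation data, so the scheme cannot be made $6^{th}$ order. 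A clean way to finish is simply to display one explicit target function (say a polynomial of degree $5$) for which the optimal choice of $P$ still leaves a nonzero $h^5$ term, which suffices to rule out a uniformly $6^{th}$ order method.
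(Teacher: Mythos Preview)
Your proposal is correct and follows essentially the same route as the paper: expand the error to order $h^5$ with $r_1 = h + P(h)h^3$ and $r_2$ determined by the area constraint, then argue that the remaining free series $P(h)$ cannot remove the $h^5$ term. The paper's computation in fact shows that the leading $h^5$ coefficient, $\dfrac{(5f'''(0)f''''(0)-2f''(0)f'''''(0))(4t-1)t^2}{480f''(0)}$, is entirely independent of $P_1$, so your affine-in-one-parameter framing collapses once you carry out the expansion---no overdetermined-system or explicit counterexample argument is needed.
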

\begin{proof}
Suppose we allow the constant $P_1$ to be a function of $h$, yielding $r_1=h+P_1(h)h^3$, with $r_2=\frac{6(10\mathcal{C}_R-r_1 (\vec{D}\times\vec{\alpha}))}{r_1(\vec{\alpha}\times\vec{\beta})+6(\vec{\beta}\times\vec{D})}$ to guarantee area preservation. Applying a Taylor expansion on the error (\ref{exacterror}) we obtain
\begin{equation*}
||B_{h_2}(t)-f(B_{h_1}(t))||_{L^{\infty}(t\in[0,1])}=\frac{(5f'''(0)f''''(0)-2f''(0)f'''''(0))(4t-1)t^2}{480f''(0)}h^5 + \mathcal{O}(h^6,t^4).
\end{equation*}
Observing that the $h^5$ term does not contain a $P_1(h)$ implies the only way to obtain $6^{th}$ order would be to have $P_2(h)\approx \frac{1}{h}$, to allow the $h^6$ term to cancel out the $h^5$ term.  This is equivalent to setting $r_1=h+P_1(h)h^2$, however, this leads to a complicated $h^5$ term containing the variable $t$. Therefore since we require $P_1(h)$ to not vary with $t$, no order higher than $h^5$ is possible.\\
\end{proof}

The following Corollary proves that if we are unable to provide an exact area, but instead an approximate area up to some order $\mathcal{O}(h^5)$, that the $L^{\infty}$ error drops to $\mathcal{O}(h^4)$.

\begin{cor}\label{cor2}
If the prescribed area is an approximation of the true area with order $\mathcal{O}(h^5)$, then the area-preserving B\'ezier interpolation discussed above satisfies $||B_{h_2}(t)-f(B_{h_1}(t))||_{L^{\infty}(t\in[0,1])}=\mathcal{O}(h^4)$.
\end{cor}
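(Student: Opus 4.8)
The plan is to revisit the proof of Theorem \ref{thm1} and Lemma \ref{lem2} and track where the exact area condition was used, then quantify the effect of perturbing it. Recall that in Theorem \ref{thm1} the choice $r_1 = h + Ph^3$ together with the area-preserving formula for $r_2$ was shown to produce, via the Taylor expansion (\ref{r2}), the expansion $r_2 = h - \frac{f''''(0)+24f''(0)P}{24f''(0)}h^3 + \mathcal{O}(h^4)$, which is precisely the hypothesis $r_2 = h + P_2(h)$ of Lemma \ref{lem2} with the critical coefficient matched. If instead the prescribed area is $\mathcal{C}_R + \mathcal{O}(h^5)$ rather than $\mathcal{C}_R$, I would substitute this perturbed value into the formula $r_2 = \frac{6(10(\mathcal{C}_R + \mathcal{O}(h^5)) - r_1(\vec{D}\times\vec{\alpha}))}{r_1(\vec{\alpha}\times\vec{\beta}) + 6(\vec{\beta}\times\vec{D})}$ and re-examine the resulting expansion. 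Since the denominator in (\ref{r2}) is $\mathcal{O}(h^2)$ (its leading term is $-2f''(0)h^2$ when $f''(0)\neq 0$), an $\mathcal{O}(h^5)$ error in the numerator translates to an $\mathcal{O}(h^3)$ error in $r_2$; more precisely, the perturbed $r_2$ becomes $r_2 = h + \tilde{P}_2(h)$ with $\tilde{P}_2(h) = -\frac{f''''(0)+24f''(0)P}{24f''(0)}h^3 + \delta(h)$ where $\delta(h) = \mathcal{O}(h^3)$ but in general no longer has the $h^3$ coefficient matched to the value required by Lemma \ref{lem2}.

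Next I would feed this back through the error bound. Writing $r_2 = h + P_2(h)h^2$ with $P_2(h) = \mathcal{O}(h)$ (a perfectly valid form), Lemma \ref{lem1} immediately applies — since $r_1 = h + Ph^3 = h + (Ph)h^2$ and $r_2 = h + P_2(h)h^2$ with both correction coefficients vanishing as $h\to 0$, so $P_1(h) + P_2(h) = \mathcal{O}(h)$, giving $q \geq 1$ — and yields at least $4^{th}$ order accuracy. Alternatively, and more transparently, one can go term by term through the estimate at the end of the proof of Lemma \ref{lem2}: the $t^2$, $t^3$ and $t^4$ coefficients of $B_{h_2}(t) - f(B_{h_1}(t))$ all involve the combination $f''''(0)\tfrac{h^4}{c} + f''(0)(P_1 h^4 + P_2(h)h)$, and substituting $P_2(h) = -\frac{f''''(0)+24f''(0)P}{24f''(0)}h^3 + \mathcal{O}(h^3)$ leaves a residual $f''(0)\cdot \mathcal{O}(h^3)\cdot h = \mathcal{O}(h^4)$ in each. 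Hence $\|B_{h_2}(t) - f(B_{h_1}(t))\|_{L^\infty([0,1])} = \mathcal{O}(h^4)$.

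The main subtlety — not really an obstacle, but the point requiring care — is to confirm that the perturbation genuinely lands at $\mathcal{O}(h^3)$ in $r_2$ and does not accidentally degrade the solution further, i.e. that the denominator of (\ref{r2}) does not vanish to higher order. When $f''(0)\neq 0$ the denominator is exactly $\Theta(h^2)$, so an $\mathcal{O}(h^5)$ numerator perturbation gives exactly $\mathcal{O}(h^3)$ in $r_2$, which is harmless. One should also note that the bound $0 < r_1, r_2 < 3h$ needed to invoke the Hermite machinery still holds for $h$ small, since the perturbation only affects higher-order terms and the leading term of $r_2$ remains $h$. Finally, in the degenerate case $f''(0) = 0$ the method was already only $4^{th}$ order by Theorem \ref{thm1}, so the conclusion is unchanged there, and the statement of the corollary is consistent with that regime as well.
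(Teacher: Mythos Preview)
Your proposal is correct and follows essentially the same route as the paper: substitute the perturbed area $\mathcal{C}_R + \mathcal{O}(h^5)$ into the formula for $r_2$, use that the denominator of (\ref{r2}) is $\Theta(h^2)$ to conclude $r_2 = h + \mathcal{O}(h^3)$, and then invoke Lemma~\ref{lem1}. The paper additionally writes the perturbed $h^3$ coefficient explicitly as $-\tfrac{f''''(0)+24f''(0)P+720M}{24f''(0)}$, but the logic is identical; your extra remarks on the $0<r_1,r_2<3h$ bound and the $f''(0)=0$ case are sound and slightly more thorough than the paper's own proof.
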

\begin{proof}
Suppose the prescribed area is not exact with error $\mathcal{O}(h^5)$. This changes equation (\ref{AreaCon}) to
\begin{equation}
\frac{r_1r_2}{60}(\vec{\alpha}\times\vec{\beta} )+\frac{r_1}{10}(\vec{D}\times\vec{\alpha})+\frac{r_2}{10}(\vec{\beta}\times\vec{D})=\mathcal{C}_R+Mh^5+\mathcal{O}(h^6),\label{AreaCon2}
\end{equation}
for $h$ small enough, where $M\in\mathbb{R}$ is some constant independent of $h$. Solving for $r_2$ yields
\begin{equation*}
 r_2=\frac{6(10(\mathcal{C}_R+Mh^5+\mathcal{O}(h^6))-r_1 (\vec{D}\times\vec{\alpha}))}{r_1(\vec{\alpha}\times\vec{\beta})+6(\vec{\beta}\times\vec{D})}.
\end{equation*}
Repeating the steps from Theorem \ref{thm1}, with $r_1=h+Ph^3$, we now obtain
\begin{equation*}
r_2=h-\frac{f''''(0)+24f''(0)P+720M}{24f''(0)}h^3+\mathcal{O}(h^4).
\end{equation*}
This results in $r_1=h+\mathcal{O}(h^3)$ and $r_2=h+\mathcal{O}(h^3)$ which satisfies the conditions for Lemma \ref{lem1}, implying $4^{th}$ order accuracy.
\end{proof}

%\begin{rem}
%Comment on the optimality of the result ( there does not exist choice of P to cancel all terms in $h^5$. )
%\end{rem}

%In the next section we present some numerical results demonstrating the area-preserving interpolation.

\section{Numerical results}\label{SecNum}

\subsection{Preliminary Considerations}\label{Prelim}
In this section we present several examples illustrating the effectiveness of exact area-preserving cubic B\'ezier interpolation. For a few of these examples we employ two methods. First we use the standard area-preserving method, with $r_1=h+Ph^3$, where $P$ is chosen a priori. We also employ an optimized method, which we call the optimized area-preserving method, where $r_1$ and $r_2$ are chosen to numerically minimize the error by extracting additional information from the target function. The Figures presented in this section will refer to the standard area-preserving method as A-P, and the optimized area-preserving method as O-A-P.  For the standard scheme we assign $P$ to be $P_{avg}$, given by

\begin{equation}
P_{avg}=\frac{1}{h^3}\left[\frac{1}{2}\left(h+\frac{6(10\mathcal{C}_R-h(\vec{\beta}\times\vec{D}))}{h(\vec{\alpha}\times\vec{\beta})+6(\vec{D}\times\vec{\alpha})}\right) - h\right].\label{Pavg}
\end{equation}
We arrive at $P_{avg}$ by taking the average of $r_1=h$ and\\ $r_1=\frac{6(10\mathcal{C}_R-h(\vec{\beta}\times\vec{D}))}{h(\vec{\alpha}\times\vec{\beta})+6(\vec{D}\times\vec{\alpha})}$, which arises by solving the area equation for $r_1$ instead of $r_2$ and simply setting $r_2=h$. A similar computation as in the proof of Theorem \ref{thm1}, shows that $\frac{6(10\mathcal{C}_R-h(\vec{\beta}\times\vec{D}))}{h(\vec{\alpha}\times\vec{\beta})+6(\vec{D}\times\vec{\alpha})}=h-\frac{f''''(0)}{24f''(0)}h^3+\mathcal{O}(h^4)$. Plugging this into the equation for $P_{avg}$ we see that $P_{avg}=\mathcal{O}(1)$ as desired.

The standard scheme described above is an efficient $5^{th}$ order method which, in most cases, performs remarkably well.  However, there are choices of $r_1$ which lead to greater precision, but they are problem specific and require an iterative process to compute. In many applications this additional precision may not be necessary, however it is important to note that more optimal choices of $r_1$ and $r_2$ can be obtained.

We therefore seek solutions to the optimization problem
\begin{equation}
\min_{(r_1,r_2)\in\mathcal{S}}||B_{2}(t,r_1,r_2(r_1))-f(B_{1}(t,r_1,r_2(r_1)))||_{L^{\infty}(t\in[0,1])},\label{OptNorm}
\end{equation}
where $\mathcal{S}$ is the set of all $(r_1,r_2)>0$ resulting in a cubic B\'ezier curve which can be represented by a function $b(x)$. We note that an exact representation of $\mathcal{S}$ is easily obtained, however the square $0<r_1,r_2\leq 3h$ does sufficiently well.

We implement the optimized method by discretizing $0<r_1\leq3h$ into a fine grid $0<r_{1_1}<r_{1_2}<\dots<r_{1_n}=3h$, then compute $r_2(r_{1_i})$. For each pair $(r_{1_i},r_2(r_{1_i}))\in\mathcal{S}$ we approximate the norm (\ref{OptNorm}) then choose the minimizer amongst all candidates.  We note that if the target curve is a parametric curve $\langle \gamma(s) \, , \, \xi(s) \rangle$, then we replace the $L^{\infty}$ norm (\ref{OptNorm}) with the Hausdorff distance (\ref{Haus}). 

In the remaining subsections we will illustrate our theoretical findings through numerical examples. We  present convergence in the $L^{\infty}$ norm, but do not include any discussion of the error in the area, as, by construction, it was found to be machine precision in all examples. 
\subsection{Example 1: Unit Circle}
Our first example is repeating the first example in section 4 of \cite{DeBoore}, interpolating the unit circle. We instead interpolate the semi circle as our scheme is rotation invariant, and measure the $L^{\infty}$ error as we partition the semi circle into subintervals with decreasing length. Beginning with 2 sub intervals, or 4 points over the entire circle up to 32 points over the entire circle, or partitioning the semi circle into 16 subintervals. At each step we compare the $L^{\infty}$ error obtained by our area-preserving schemes versus the (non-area preserving) curvature matching scheme in \cite{DeBoore}. The results are presented in Table \ref{Tab1}.
\begin{table}[!ht]
\begin{center}
 \begin{tabular}{||c | c | c | c ||} 
 \hline
 Number of points & Curvature Matching \cite{DeBoore} & Area-Preserving & Optimized Area-Preserving \\ [0.5ex] 
 \hline\hline
 4 & 1.4$\times 10^{-3}$ & 2.9$\times 10^{-3}$ & 2.6$\times 10^{-4}$ \\ 
 \hline
 8 & 2.1$\times 10^{-5}$ & 5.7 $\times 10^{-5}$ & 4.5$\times 10^{-6}$  \\
 \hline
 16 & 3.2$\times 10^{-7}$ & 1.0 $\times 10^{-6}$ & 8.2$\times 10^{-8}$  \\
 \hline
 32 & 4.9$\times 10^{-9}$ & 1.6$\times 10^{-8}$ & 3.3$\times 10^{-9}$ \\
 \hline 
\end{tabular}
 \vspace{.1 cm}
 \caption{$L^{\infty}$ error of area-preserving versus curvature matching.\label{Tab1}}
\end{center}
\end{table}

The results show that the curvature matching and standard area-preserving methods remain close throughout the experiment, with the curvature matching method obtaining two to three times better accuracy for each test. This is somewhat surprising as the curvature method is a $6^{th}$ order accurate. We expect that if the experiments were continued to a finer partition we would see a larger discrepancy in accuracy appear. The optimized method however is significantly more accurate than the others with between 2 and 5 times more accuracy than the curvature matching method through sixteen points. At 32 points the $6^{th}$ order curvature method closes the gap and falls only slightly short of the optimized area-preserving method. Figure \ref{Circle convergence2} confirms that the convergence is indeed $5^{th}$ order, agreeing with the statement of Theorem \ref{thm1}.

\begin{figure}[!ht]
\begin{center}
\includegraphics[width=110mm,height=90mm]{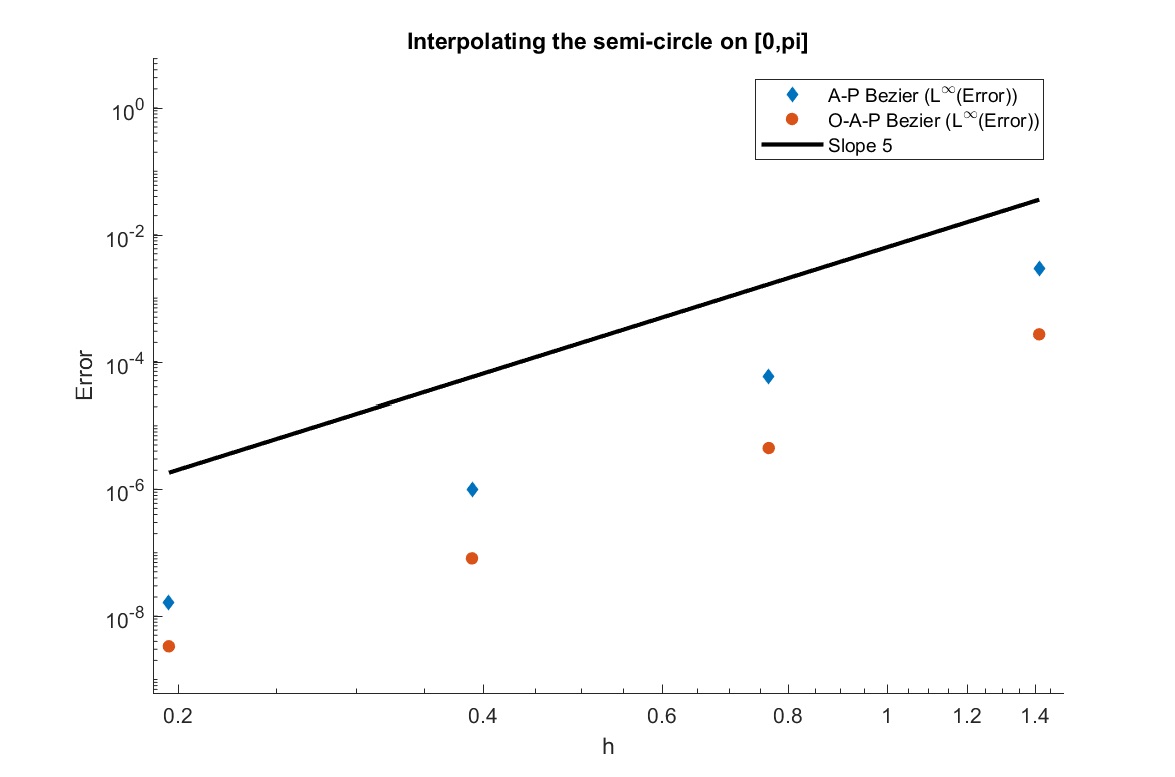}
\end{center}
\caption{Illustrating $5^{th}$ order convergence when interpolating the unit circle.}
\label{Circle convergence2}
\end{figure}

\subsection{Vanishing Curvature}
The following example illustrates the statement of Theorem \ref{thm1} when we have vanishing curvature. The target function is $f(x)=\sin(x)+3x^4-4x^3+x$ for $x\in[0,h]$, and the resulting convergence using $P_{avg}$ defined in section \ref{Prelim} is given in Figure \ref{VanishCurv}. As predicted we obtain $4^{th}$ order accuracy, the same as would be obtained by employing the method of de Boor et al. As we are only illustrating the $4^{th}$ order accuracy of the method in the presence of vanishing curvature, we do not discuss the optimized approach in this problem.

\begin{figure}[!ht]
\begin{center}
\includegraphics[width=110mm,height=90mm]{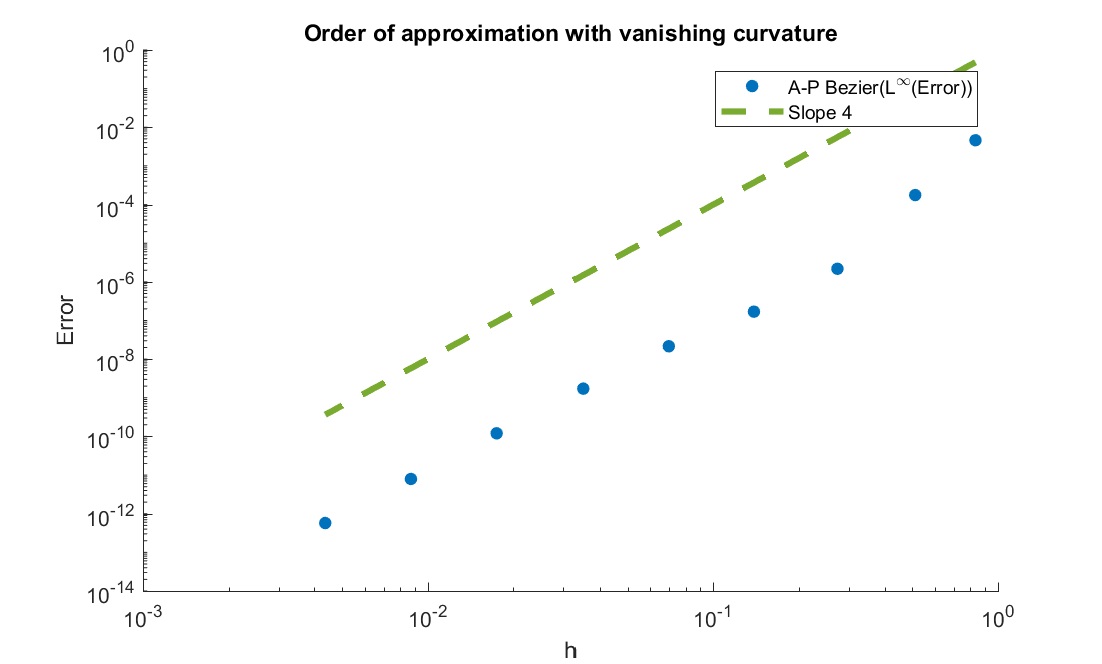}
\end{center}
\caption{Example of $4^{th}$ order accuracy with vansihing curvature.}
\label{VanishCurv}
\end{figure}

\subsection{Comparison to the CVE method}
To get a sense of how the standard area-preserving geometric interpolation method compares to other non-local $G^1$ methods, we repeat the last example from \cite{jaklicCVE}, the method which minimizes an approximate curvature variation energy.  In this example we are interpolating the parametric curve 
\begin{equation}
\vec{f}(t)=\langle (t^3-t+1)\sin(t),t\cos(t) \rangle \quad \text{for $t\in[0,1]$}. \label{JaklicFunc}
\end{equation}

The function (\ref{JaklicFunc}) is broken up into two subintervals $[0,0.3678]$ and $[0.3678,1]$ in the first example, and $[0,0.48]$ and $[0.48,1]$ in the second. The plot of the resulting interpolations are shown in Figure \ref{Jaklic}. Since (\ref{JaklicFunc}) is a parametric function, we estimate the error by approximating the Hausdorff distance by partitioning the interpolant and $\vec{f}(t)$ and computing (\ref{Haus}) discretely. The results are given in Table \ref{Tab2}. We note that the Hausdorff errors for the CVE method were not included in \cite{jaklicCVE}, however it is clear by inspection of Figure 5 and 6 in \cite{jaklicCVE} that the area-preserving method obtained significantly higher accuracy.
%\begin{table}[!ht]
%\begin{center}
% \begin{tabular}{||c | c | c ||} 
% \hline
% Interpolation intervals & CVE \cite{jaklicCVE} & Area-Preserving \\ [0.5ex] 
% \hline\hline
% [0,0.3678] and [0.36,1] &$\approx 3$ $\times 10^{-2}$ & 6.9$\times 10^{-4}$  \\ 
% \hline
% [0,0.48] and [0.48,1] &$\approx  2$ $\times 10^{-2}$ & 2.4 $\times 10^{-3}$  \\
%
% \hline 
%\end{tabular}
% \vspace{.1 cm}
% \caption{Approximate Hausdorff error of CVE method of \cite{jaklicCVE} versus the area-preserving method. \label{Tab2}}
%\end{center}
%\end{table}
\begin{table}[!ht]
\begin{center}
 \begin{tabular}{||c | c ||} 
 \hline
 Interpolation intervals & Standard Area-Preserving \\ [0.5ex] 
 \hline\hline
 [0,0.3678] and [0.3678,1] & 6.9$\times 10^{-4}$  \\ 
 \hline
 [0,0.48] and [0.48,1] & 2.4 $\times 10^{-3}$  \\

 \hline 
\end{tabular}
 \vspace{.1 cm}
 \caption{Approximate Hausdorff error of the standard area-preserving method corresponding to the tests in Figure \ref{Jaklic}. \label{Tab2}}
\end{center}
\end{table}
%The results in Table \ref{Tab2} show us that between 10 and 40 times the accuracy may be gained when using the standard area-preserving method over the Curvature Variation Energy method of \cite{jaklicCVE}.

\begin{figure}[!ht]
\begin{center}
\includegraphics[width=60mm,height=60mm]{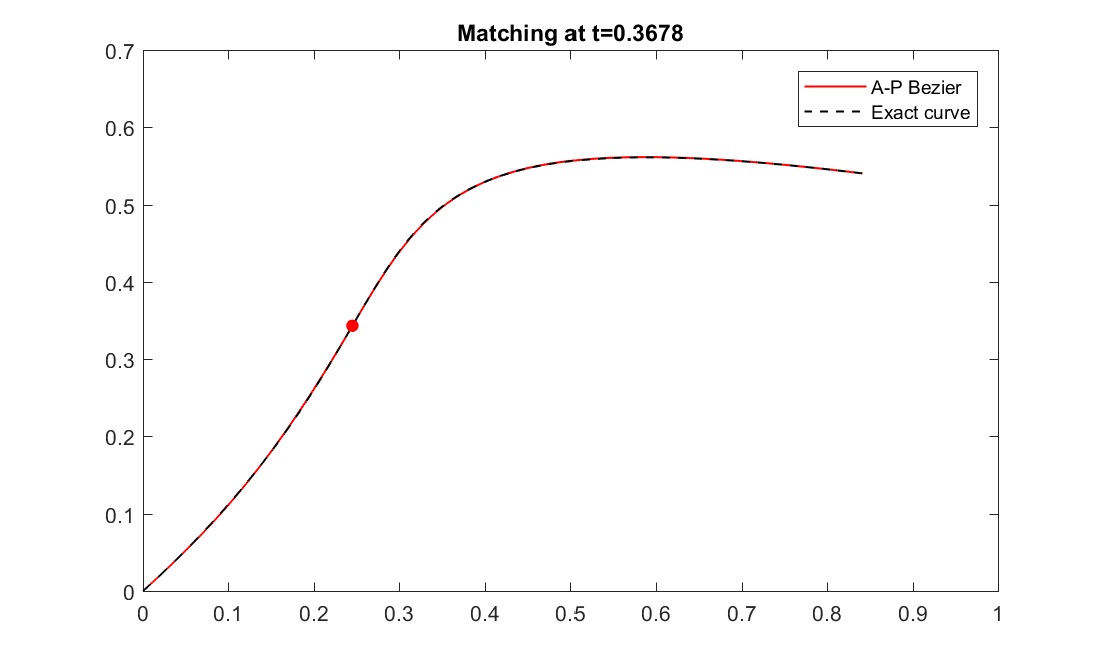}
\includegraphics[width=60mm,height=60mm]{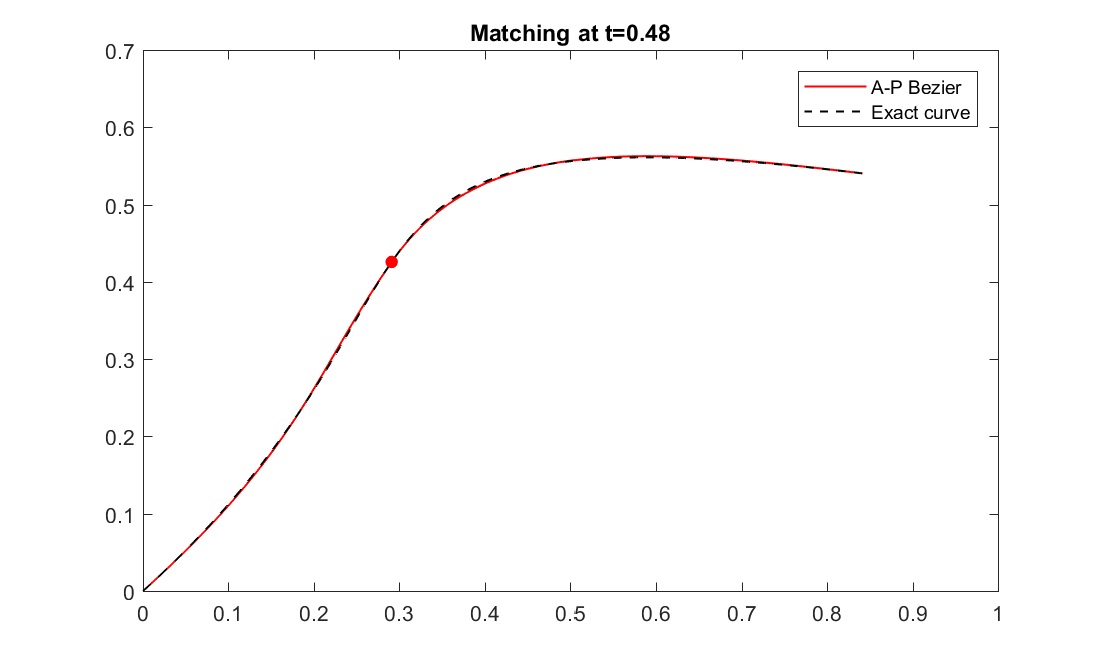}
\end{center}
\caption{Two area-preserving $G^1$ polynomials interpolate (\ref{JaklicFunc}) joining at $t=0.3678$ on the left panel and $t=0.48$ on the right panel.}
\label{Jaklic}
\end{figure}

\subsection{Optimized Area-Preservation}\label{SecOptBez}
In this next example we showcase the accuracy that may be gained by selecting optimized $r_1$ and $r_2$. We interpolate the function $f(x)=4x(x-0.5)(x-1)e^x$ on the interval $x\in[0,1]$. Figure \ref{OptimizedBezier} shows $f(x)$, the standard cubic Hermite polynomial interpolant (see \cite{Bartel}), the standard area-preserving cubic B\'ezier with $P=0$ and then the optimized area-preserving B\'ezier. We note that Figure \ref{OptimizedBezier} appears to only have 3 curves because the optimized curve is nearly identical to the target function $f(x)$. 

\begin{figure}[!ht]
\begin{center}
\includegraphics[width=100mm,height=60mm]{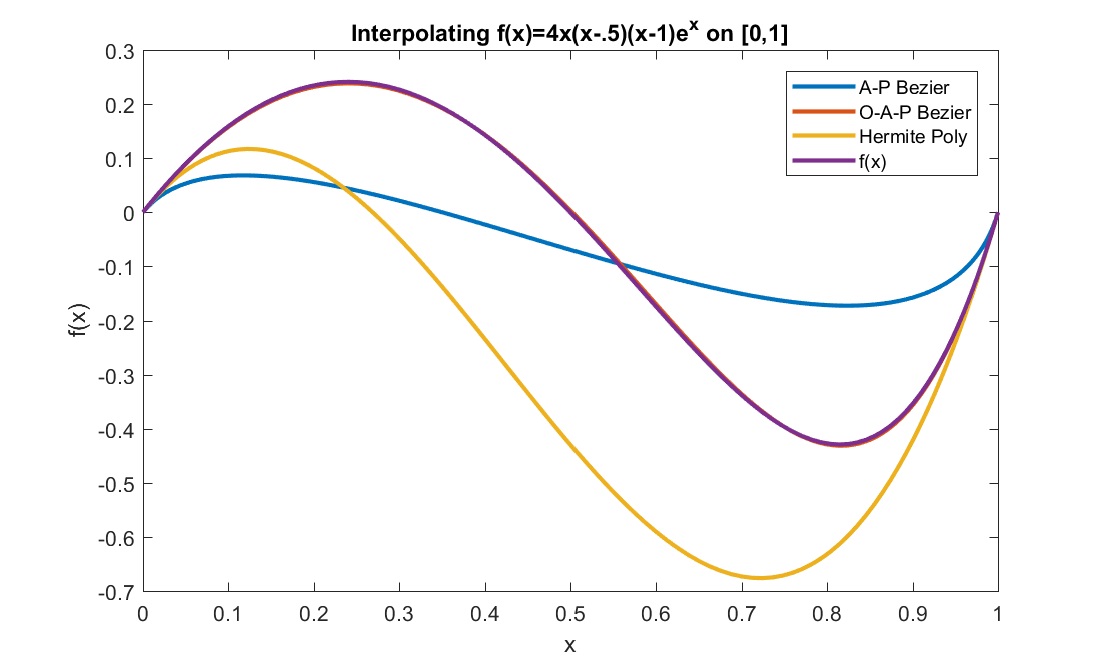}
\end{center}
\caption{Interpolation of $f(x)=4x(x-0.5)(x-1)e^x$.}
\label{OptimizedBezier}
\end{figure}

\subsection{Piecewise Interpolation I}

In the next example we construct a piecewise interpolation of the function $f(x)=(x+1)e^{x}-1$ on the interval $[0,1]$. We partition the domain into finer and finer subintervals, compute the interpolants on each subinterval, then compute the maximum $L^{\infty}$ error over all interpolants. Figure \ref{NoInflection} shows the $L^{\infty}$ error convergence for the standard cubic Hermite polynomial interpolant, the standard area-preserving cubic B\'ezier interpolation, and the optimized area-preserving cubic B\'ezier interpolation.

\begin{figure}[!ht]
\begin{center}
\includegraphics[width=100mm,height=60mm]{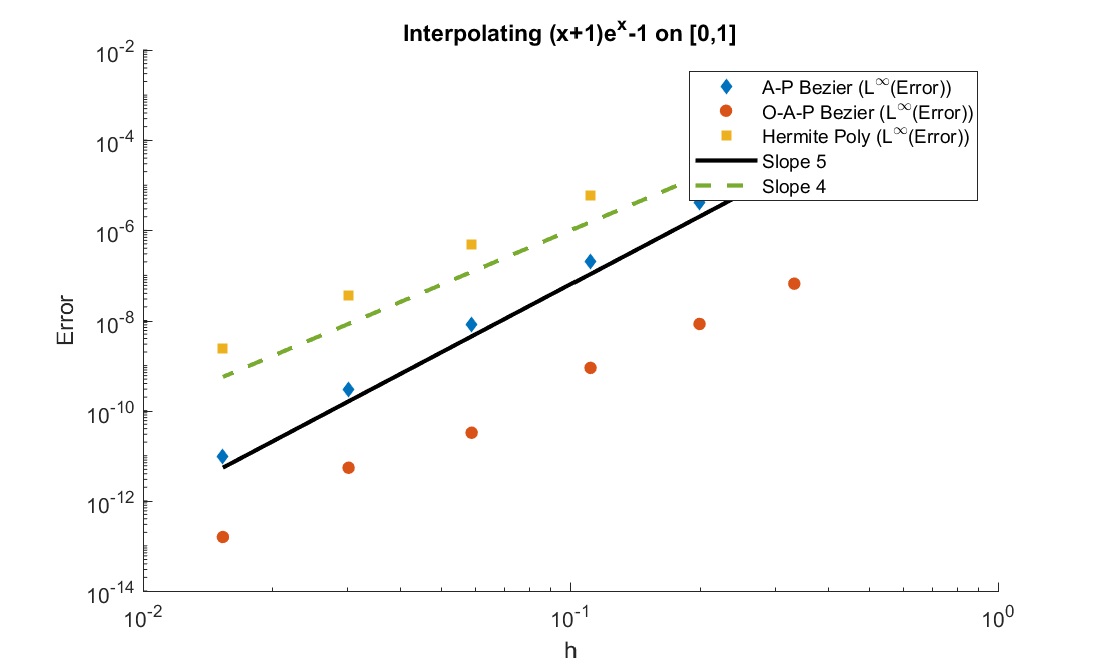}
\end{center}
\caption{$5^{th}$ order convergence of standard area-preserving method and optimized method.}
\label{NoInflection}
\end{figure}

Figure \ref{NoInflection} showcases the $5^{th}$ order convergence of the standard area-preserving method with $r_1=h$, shown in blue, and the $4^{th}$ order convergence of the Hermite polynomial. The dramatic improvement of the optimized curve is also visible as in Figure \ref{OptimizedBezier}, but the convergence is less consistent.
\subsection{Piecewise Interpolation II}
The last example has the same set up as the previous, but now the target function is $f(x)=x^2(1-x)e^x$. The significance of this example is that $f(x)$ has an inflection point, therefore by Theorem \ref{thm1} we expect to obtain $4^{th}$ order convergence. The results are shown in in Figure \ref{Inflection}. We see that both the optimized and unoptimized methods are $4^{th}$ order accurate by observing the last few data points. The inconsistency of the optimized convergence is simply a result of where the grid points land relative to the inflection point and areas with high curvature.

\begin{figure}[!ht]
\begin{center}
\includegraphics[width=100mm,height=60mm]{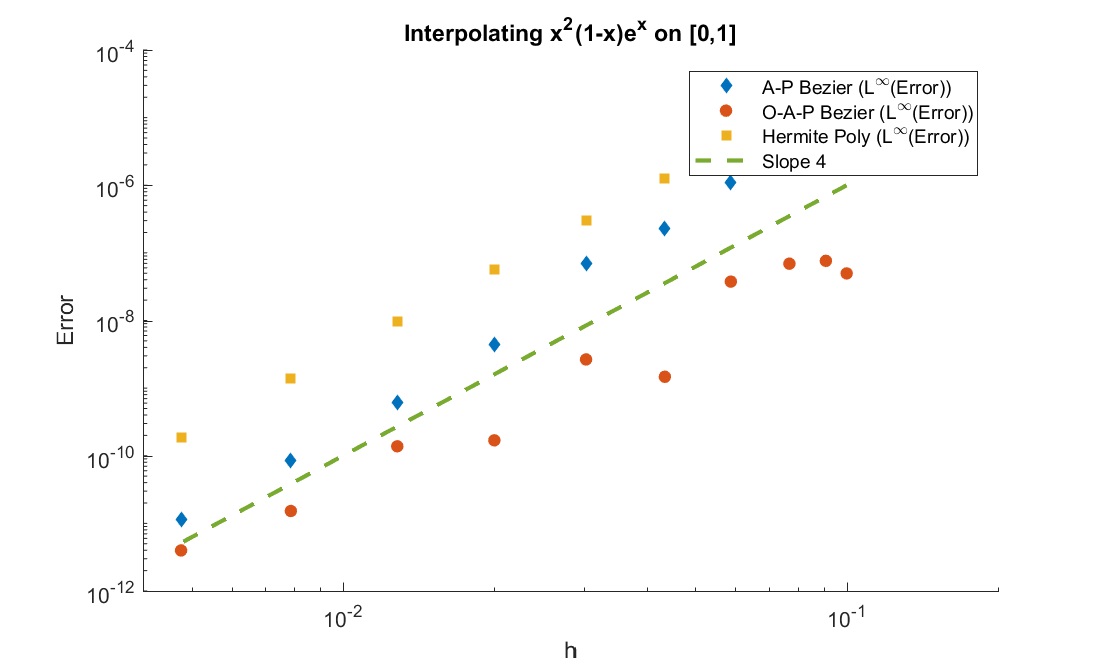}
\end{center}
\caption{$4^{th}$ order convergence of standard area-preserving method and optimized method when an inflection point is present within interpolation domain.}
\label{Inflection}
\end{figure}

\subsection{Scalar Conservation law}\label{Conservation}
Thus far we have yet to apply the area-preserving cubic B\'ezier interpolation framework to a problem where the conservation of area is vital to the resulting curve. Such an interpolation problem is standard in numerical methods for conservation laws, where conservation of the studied quantity is fundamental to the dynamics. As discussed briefly in \cite{LeVeque} and \cite{LevFinite}, weak solutions to scalar conservation laws in one space dimension may be obtained by utilizing the equal area principle. As shown in Figure \ref{Proj Sketch}, the application of the equal area principle requires one to find the location of a vertical line which partitions the overturned curve into two regions with equal area. The corresponding weak solution, as shown in the right panel of Figure \ref{Proj Sketch}, is discontinuous with a jump at the location of equal area line.

\begin{figure}[!ht]
\begin{center}
\includegraphics[width=40mm,height=30mm]{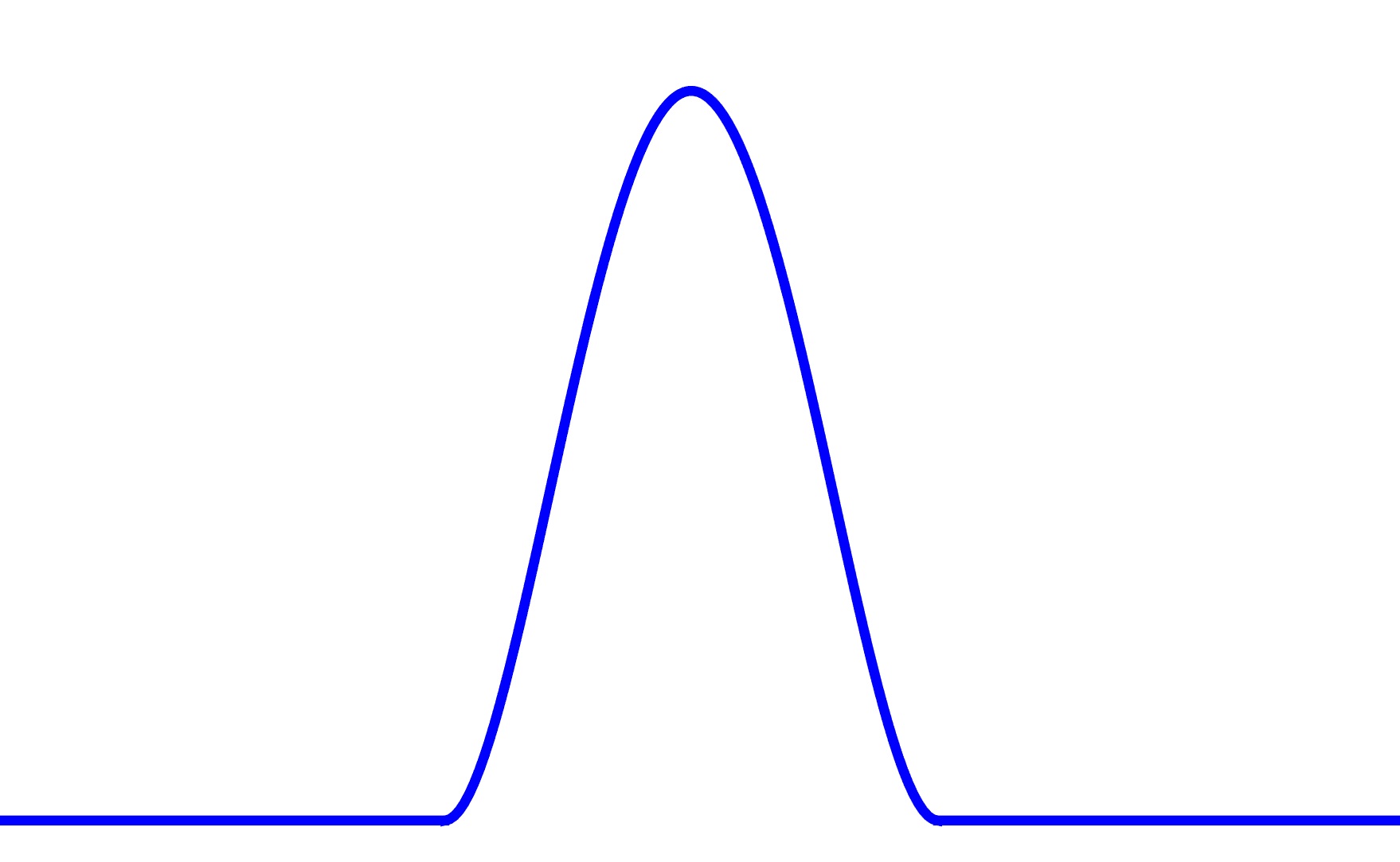}
\includegraphics[width=40mm,height=30mm]{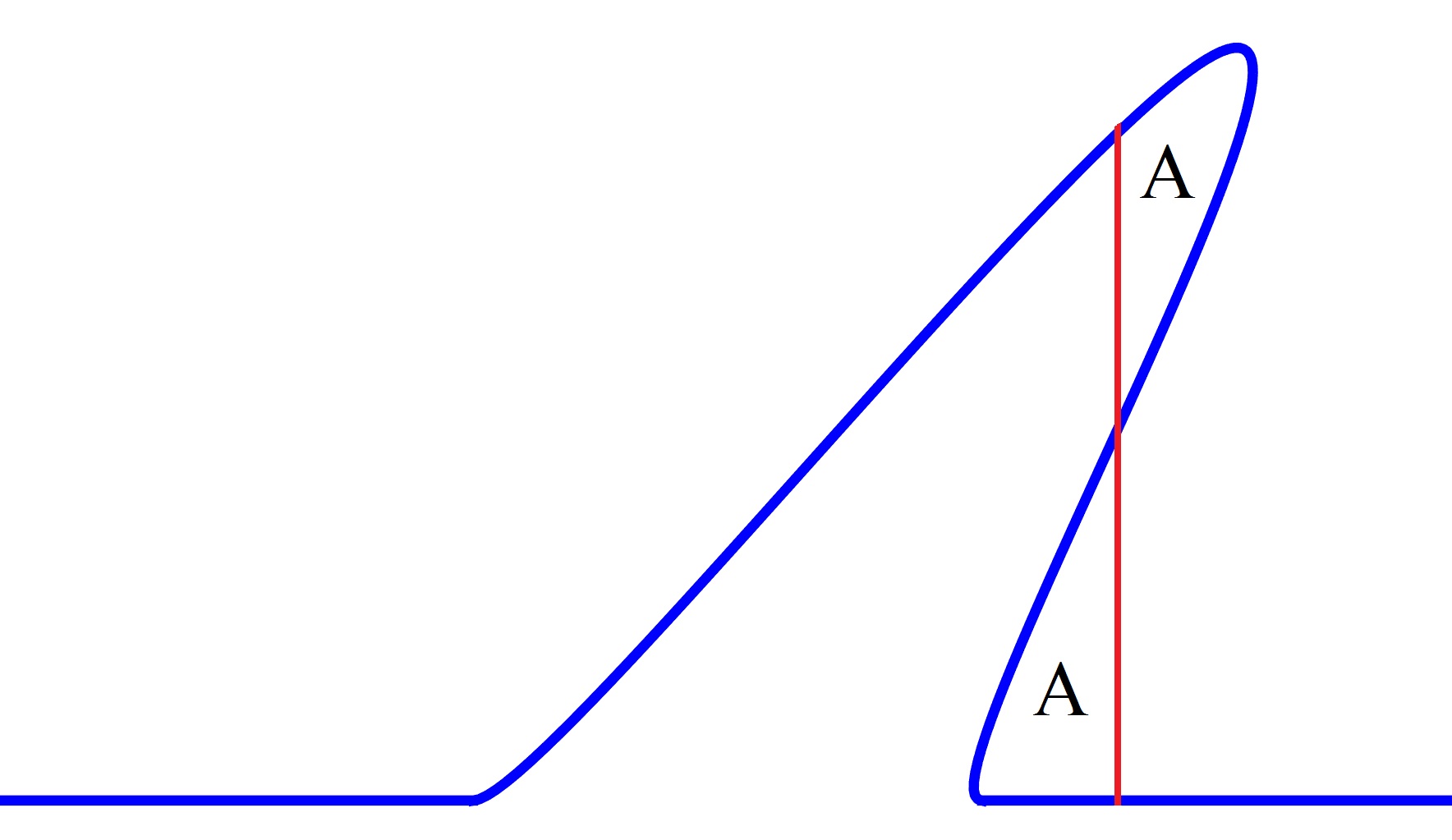}
\includegraphics[width=40mm,height=30mm]{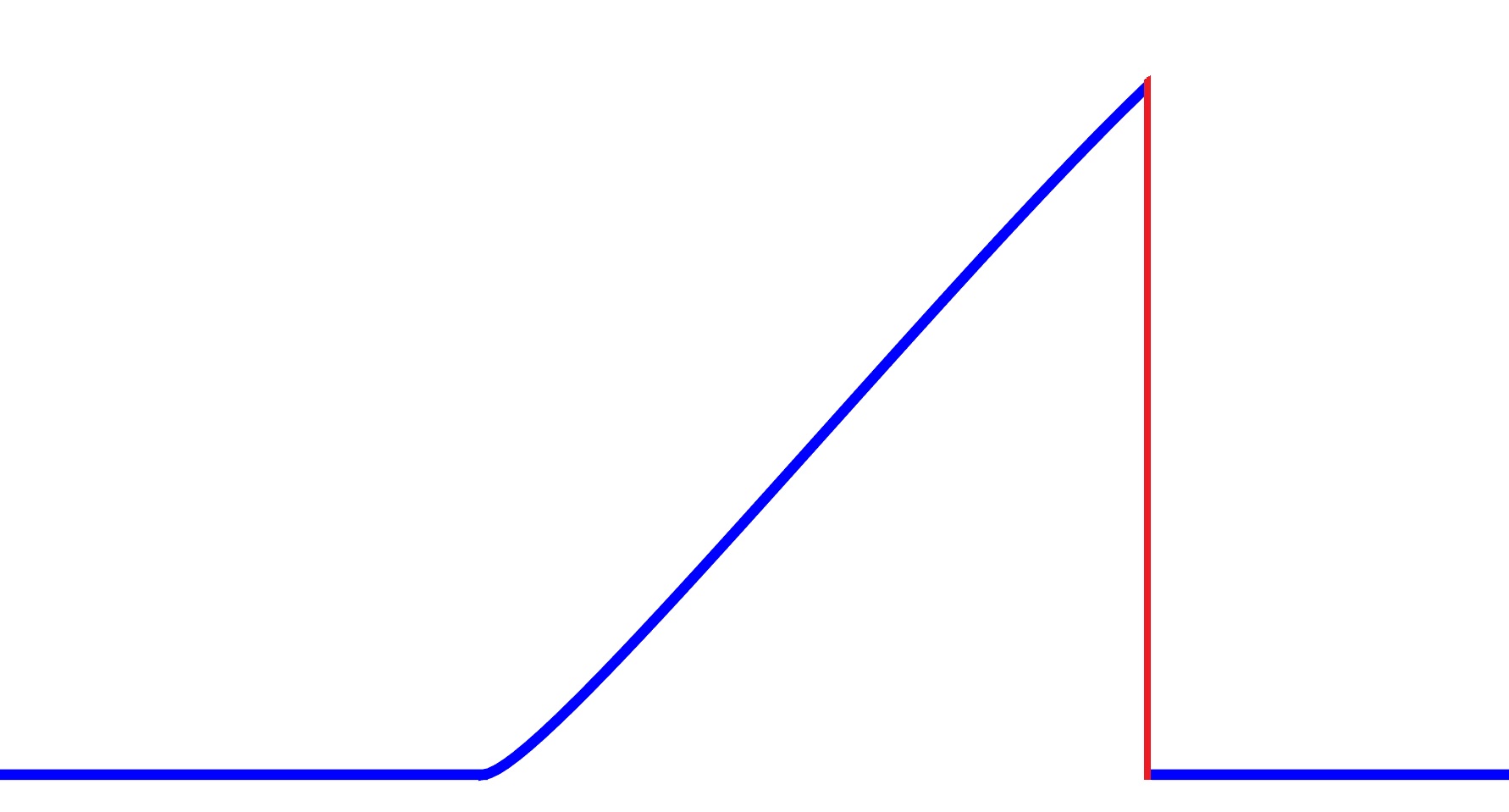}
\end{center}
\caption{ An illustration of the equal area principle. }
\label{Proj Sketch}
\end{figure}

From a numerical standpoint, the equal area principle is seldom leveraged as it requires the approximation of a multi-valued curve. To bypass this, for example in \cite{Seibold}, linear interpolation is employed.  Although we leave the analysis and discussion of a method utilizing area-preserving cubic B\'ezier interpolation for another paper, it is clear based on Figure \ref{Proj Sketch} that such an interpolation framework is ideal for the application of the equal area principle.   We note that the data required to implement area-preserving cubic B\'ezier interpolation may be obtained exactly by studying the characteristic equations of the corresponding conservation law.   
\section{Discussion}\label{Discussion}
%Throughout many areas of mathematical physics and engineering, structure preserving numerical scheme conserving quantities such as mass, momentum or energy are of great value. It is for this purpose that we set out to develop a geometric interpolation framework which exactly preserves the prescribed area for generic curves in the plane.

In this paper we set out to design a new cubic B\'ezier interpolation framework which exactly preserves area while maintaining high accuracy. Theorem \ref{thm1} demonstrates that, provided we select $r_1$ correctly, matching the prescribed area with a geometric cubic Hermite polynomial yields $5^{th}$ order accuracy, one order higher than the standard geometric cubic Hermite. This order is optimal for area-preserving cubics as shown in Corollary \ref{cor1}. In section 3 we verify numerically the statements of Theorem \ref{thm1} by obtaining $5^{th}$ order accuracy when the curvature is non-vanishing and $4^{th}$ order otherwise. The numerical comparisons with the methods of \cite{DeBoore} and \cite{jaklicCVE} show that the our area-preserving method is competitive or superior in the $L^{\infty}$ or Hausdorff error, with the area-preserving method offering the additional benefit of being conservative. The optimized method discussed in section \ref{Prelim} and \ref{SecOptBez} shows that further improvements to accuracy can be obtained when an iterative process to find an optimal choice of $r_1$ and $r_2$ is employed. Finally in section \ref{Conservation} we briefly discuss how area-preserving geometric interpolation can be applied to conservations laws through the use of the equal area principle. In the near future we look forward to further investigating the application to conservation laws along with gradient augmented level set methods, such as those discussed in \cite{Nave1,Nave2}.

\bibliographystyle{amsplain}
\bibliography{references}

\end{document}